\documentclass[11pt]{article}

\usepackage[compress]{natbib}
\usepackage{amsmath,amssymb,amsthm}
\usepackage{color}
\usepackage[margin=1in]{geometry}
\usepackage{setspace}
\usepackage{amsthm, amsmath, amssymb, amsfonts}
\usepackage{subcaption}
\usepackage{bm}
\usepackage[dvipsnames]{xcolor}
\usepackage{hyperref}
\hypersetup{pdftex, colorlinks=true, citecolor=MidnightBlue, linkcolor=red!80!black, urlcolor=MidnightBlue}
\usepackage{caption}
\usepackage{url}            
\usepackage{booktabs}       
\usepackage{nicefrac}       
\usepackage{graphicx}
\usepackage{caption}
\usepackage{thm-restate}
\usepackage[export]{adjustbox}

\usepackage{microtype}
\usepackage{mathpazo} 
\usepackage[scaled]{helvet} 
\usepackage{courier} 
\normalfont
\usepackage[T1]{fontenc}

\usepackage{threeparttable}
\allowdisplaybreaks[4] 
\usepackage[shortlabels]{enumitem} 

\usepackage{tikz}
\usetikzlibrary{fit}
\tikzset{%
	highlight/.style={rectangle,blend mode = multiply,draw=blue!90!black,thick,rounded corners = 0.3 mm,inner sep=0.5pt}
}

\usepackage{tcolorbox}
\def\O#1{\text{\ding{\the\numexpr#1+171}}}

\newcommand{\bz}{\mathbf 0}

\newcommand{\R}{\mathbb R}

\DeclareMathOperator{\prox}{prox}

\def\N{{\mathbb{N}}}
\def\O{{\mathbb{O}}}

\def\R{{\mathbb{R}}}

\def\bI{{\mathbf{I}}}

\def\bU{{\mathbf{U}}}

\def\cA{{\cal A}}

\def\cF{{\cal F}}

\def\cT{{\cal T}}

\def\a{\bm{a}}

\def\d{\bm{d}}
\def\e{\bm{e}}

\def\g{\bm{g}}

\def\j{\bm{j}}

\def\m{\bm{m}}

\def\x{\bm{x}}
\def\y{\bm{y}}
\def\z{\bm{z}}
\def\bz{{\mathbf 0}}

\def\1{{\mathbf 1}}

\usepackage{thmtools}

\declaretheoremstyle[parent=section]{definitionwithend}

\declaretheorem[style=definitionwithend]{theorem}
\declaretheorem[style=definitionwithend]{proposition}
\declaretheorem[style=definitionwithend]{definition}

\declaretheorem[style=definitionwithend]{remark}

\declaretheorem[style=definitionwithend]{lemma}

\usepackage{mathtools}
\usepackage{pifont}
\usepackage[nameinlink]{cleveref}
\usepackage[titletoc]{appendix}
\declaretheorem[style=definitionwithend]{condition}

\crefname{assumption}{Assumption}{Assumptions}
\crefname{lemma}{Lemma}{Lemmas}
\crefname{theorem}{Theorem}{Theorems}
\crefname{corollary}{Corollary}{Corollaries}
\crefname{proposition}{Proposition}{Propositions}
\crefname{condition}{Condition}{Conditions}
\crefname{claim}{Claim}{Claims}
\crefname{figure}{Figure}{Figures}
\crefname{remark}{Remark}{Remarks}
\crefname{section}{Section}{Sections}
\crefname{example}{Example}{Examples}
\crefname{definition}{Definition}{Definitions}
\crefname{table}{Table}{Tables}
\crefname{equation}{}{}
\crefname{enumi}{}{}
\crefname{appendix}{Appendix}{Appendices}
\Crefname{appendix}{Appendix}{Appendices}

\usepackage{tikz}
\usetikzlibrary{arrows.meta,positioning}

\def\mc{\mathcal}
\def\sA{{\mathsf{A}}}

\DeclareMathOperator{\supp}{supp}
\DeclareMathOperator{\Lip}{Lip}
\DeclareMathOperator{\zr}{zr}
\newcommand{\norm}[1]{\left\lVert #1\right\rVert}

\title{Optimal Deterministic Oracle Complexity for \\Weakly Convex Optimization}

\date{\today}
\author{%
Jiajin Li \qquad Siyu Pan\\[0.75em]
Sauder School of Business,\\
The University of British Columbia\\[0.5em]
\small \texttt{\{jiajin.li,siyu.pan\}@sauder.ubc.ca}
}

\begin{document}
\maketitle

\begin{abstract}
We study the oracle complexity of finding $\epsilon$-stationary points of $\rho$-weakly convex and $G$-Lipschitz functions, where stationarity is measured by the gradient of the Moreau envelope. We consider a first-order oracle that returns both the function value and the full subdifferential at every query point. We prove that every deterministic first-order algorithm requires
$
 \Omega(
   {\rho G^2\Delta}/{\epsilon^4}
 )
$
oracle queries whenever $\Delta \leq {G^2}/{\rho}$, where $f(\bz)-\inf f \leq \Delta$. 
This lower bound matches the best known deterministic and stochastic
first-order upper bounds, up to universal constants, and establishes
the optimal deterministic oracle complexity.
The result reveals a fundamental complexity separation between smooth
nonconvex and nonsmooth weakly convex optimization. While smooth nonconvex
minimization admits a \(\Theta(\epsilon^{-2})\) oracle complexity, nonsmooth
weakly convex optimization incurs an intrinsic additional
\(\epsilon^{-2}\) factor arising from nonsmooth geometry rather than
stochasticity.
\end{abstract}

\section{Introduction}
\label{sec:introduction}
The oracle complexity of finding stationary points in nonconvex optimization has been well understood in the smooth setting. 
Deterministic first-order methods achieve the optimal complexity $\Theta(\epsilon^{-2})$ \citep{CarmonEtAl2021,carmon2020lower}, whereas stochastic first-order methods require $\Theta(\epsilon^{-4})$  oracle queries \citep{arjevani2023lower,ghadimi2013stochastic}. This complexity separation is now well understood in the smooth setting. 

The current complexity picture for weakly convex optimization is less
conclusive. Weakly convex functions encompass smooth nonconvex and
convex-composite objectives
\citep{vial1983strong,drusvyatskiy2019efficiency}, and provide a broad
nonsmooth nonconvex setting in which the Moreau envelope yields a smooth measure of first-order stationarity
\citep{moreau1965proximite,
DavisDrusvyatskiy2019,duchi2018stochastic}.
Existing deterministic first-order methods achieve an
\(\mathcal O(\epsilon^{-4})\) oracle complexity
\citep{kong2024cost,liao2025proximal}, while the best-known stochastic
methods attain the same dependence on \(\epsilon\)
\citep{DavisDrusvyatskiy2019,davis2019proximally,davis2018stochastic}.
This naturally raises the following question: 
\begin{center}
\setlength{\fboxsep}{10pt}
\fbox{
  \parbox{0.9\textwidth}{
  \centering 
  \bf{Is the additional {\color{red}$\epsilon^{-2}$}
 dependence an intrinsic consequence of nonsmooth geometry, \\ or can deterministic methods do better?}
  }
}
\end{center}
In this paper, we show that the additional dependence is intrinsic.  Specifically, we study
the oracle complexity of the weakly convex optimization problem
\begin{equation}
\label{eq:prob}
\min_{\x\in\mathbb R^d} f(\x),
\tag{P}
\end{equation}
where \(f:\mathbb R^d\to\mathbb R\) is globally \(G\)-Lipschitz,
\(\rho\)-weakly convex, and possibly nonsmooth. We assume that
$
f(\bz)-\inf f\le \Delta.
$
Following the standard framework for weakly convex optimization,
stationarity is measured by the gradient of the Moreau envelope with
parameter \(\tfrac{1}{2\rho}\). Accordingly, given \(\epsilon>0\), a point
\(\x\in\mathbb R^d\) is \(\epsilon\)-stationary if 
$
\|\nabla f_{1/(2\rho)}(
\x)\|\le\epsilon.
$ 
We adopt a deterministic first-order oracle that returns both the function value and the \emph{full subdifferential} at every query point. This oracle rules out algorithmic advantages arising from subgradient selection and exposes the intrinsic complexity of nonsmooth geometry. 

Our main result is the following deterministic oracle lower bound. 
\begin{theorem}[Informal]
In the small-gap regime
\(\Delta \leq \tfrac{G^2}{\rho}\), for all sufficiently small
\(\epsilon>0\),  every deterministic first-order algorithm requires
\[
 \Omega\left(
   \frac{\rho G^2\Delta}{\epsilon^4}
 \right)
\]
oracle queries to find an $\epsilon$-stationary point of a $G$-Lipschitz, $\rho$-weakly convex function. 
\end{theorem}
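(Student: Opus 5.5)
We will use a resisting-oracle (adversary) argument with a hard instance glued together from many local "nonsmooth traps." The key scaling is that stationarity with respect to the Moreau envelope measures distance to the proximal point: \(\|\nabla f_{1/(2\rho)}(\x)\| = 2\rho\|\x-\hat\x\|\). So a point is \(\epsilon\)-stationary exactly when it lies within distance \(r:=\epsilon/(2\rho)\) of a near-stationary proximal point.

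\textbf{Building block.} The basic block is a function on a ball of radius \(\Theta(r)\). It looks like a linear function of slope \(G\) (hence far from stationary) everywhere except in a tiny hidden region. There it has a local minimizer or kink structure that yields a point with small Moreau gradient. The block is to be built as a max of affine pieces minus \(\tfrac{\rho}{2}\|\cdot\|^2\), which keeps it \(\rho\)-weakly convex and \(G\)-Lipschitz. Weak convexity only allows bending by \(\rho\) over distance \(r\), so the block can decrease the value by about \(G r\). We also require that \(\epsilon\)-stationary points exist only near the hidden "valley."

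\textbf{Chaining and counting.} We then chain such blocks in a Carmon-style zero-chain fashion, each block hidden along a fresh coordinate direction. The chain has length \(T\approx \Delta/(G r)=2\rho\Delta/(G\epsilon)\), so the total drop is \(\Delta\), using the budget \(f(\bz)-\inf f\le\Delta\). The \(\epsilon^{-4}\) rate should come from two extra factors: locating each valley costs many queries, not one. In a nonsmooth landscape a query reveals only the local affine pieces (the full subdifferential at that point), not the valley location. So we will hide each valley among \(N\approx (G/\epsilon)^{3}\cdot\)const candidate positions, packed at scale \(\epsilon/\rho\cdot(\epsilon/G)\). Each query can certify or exclude only \(O(1)\) candidates.

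\textbf{Adversary bookkeeping.} The adversary answers each query consistently with a function that is linear there, and commits a valley location only after all but one candidate are excluded. Combining the pieces gives a total of \(\Omega(T\cdot N')\) queries, and we will tune \(N'\) so that \(T N' \asymp \rho G^2\Delta/\epsilon^4\), i.e. \(N'\asymp G^3/\epsilon^3\). The hypothesis \(\Delta\le G^2/\rho\) is needed so that the whole construction fits in the region where the Lipschitz and weak-convexity constraints are compatible, namely region diameter \(\lesssim G/\rho\).

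\textbf{Main obstacle.} The hardest step is designing the block so that the full subdifferential at a query reveals essentially nothing about the hidden valley beyond \(O(1)\) candidates, while \(\rho\)-weak convexity (limited negative curvature) still permits hiding \(\Theta((G/\epsilon)^{2})\)-many-per-dimension traps. We would first try a max of many affine functions with nearly identical slopes, differing by \(\epsilon\), arranged on a sphere as in Nemirovski–Yudin nonsmooth convex lower bounds. Each query then sees only the active pieces, which reproduces the convex \(\Omega(G^2 r^2/\delta^2)\)-type count at accuracy \(\delta\sim\epsilon r\). We would combine this with the chain via a max/min gluing that preserves weak convexity.
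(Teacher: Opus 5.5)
Your proposal has a genuine gap in the per-block accounting. You let each block consume a drop of $Gr$ with $r=\epsilon/(2\rho)$. Then only $T\approx\rho\Delta/(G\epsilon)$ blocks fit, and you need $(G/\epsilon)^3$ queries per block. But a block with gap $G\epsilon/\rho$ costing $(G/\epsilon)^3$ queries is exactly the statement being proved, specialized to $\Delta=G\epsilon/\rho$. So the reduction is circular, and none of your mechanisms supplies it:
\begin{enumerate}[label=(\roman*)]
\item \emph{The block as described cannot exist.} $\partial f$ is $\rho$-hypomonotone. If $z_0$ has a subgradient $g_0$ with $\|g_0\|\le\epsilon$, and $z_0-tv$ has gradient $Gv$ with $\|v\|=1$, then $\langle Gv-g_0,-tv\rangle\ge-\rho t^2$ forces $t\ge(G-\epsilon)/\rho\gg r$. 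A slope-$G$ linear region cannot sit within $O(\epsilon/\rho)$ of a valley.
\item \emph{Fine packing produces no hardness.} At a local minimizer $z_0$ the proximal map fixes $z_0$, so $\nabla f_{1/(2\rho)}(z_0)=\bz$. Since $\nabla f_{1/(2\rho)}$ is $2\rho$-Lipschitz, the whole ball of radius $\epsilon/(2\rho)$ around $z_0$ consists of $\epsilon$-stationary points. This ball is comparable to your block, so the adversary must discard every candidate within $\epsilon/(2\rho)$ of each query. A single query therefore excludes a constant fraction of candidates packed at scale $\epsilon^2/(\rho G)$, not $O(1)$ of them. In any fixed dimension, a number of queries independent of $G/\epsilon$ covers the block.
\item \emph{Your own count falls short.} Your Nemirovski--Yudin estimate at accuracy $\delta\sim\epsilon r$ gives $G^2r^2/\delta^2=(G/\epsilon)^2$ queries per block, a factor $G/\epsilon$ short.
\end{enumerate}

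The consistent repair, which is what the paper does, is to let each block consume only $\Theta(\epsilon r)=\Theta(\epsilon^2/\rho)$ of the gap. Then $M=\Theta(\rho\Delta/\epsilon^2)$ blocks fit, each a high-dimensional chain of length $N=\Theta(G^2/\epsilon^2)$, for a total $MN\asymp\rho G^2\Delta/\epsilon^4$. The substance of the proof is exactly what you leave open:
\begin{enumerate}[label=(\alph*)]
\item \emph{Full-subdifferential oracle.} A classical max-of-affine chain reveals all tied pieces at once (for instance at the origin). The paper prevents this with strictly decreasing intercepts $c_j$, so only the first hidden facet can be active.
\item \emph{Gluing.} The gluing must preserve weak convexity. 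A min does not ($\min\{x,-x\}=-|x|$), and product couplings in the style of Carmon et al.\ do not obviously do so for nonsmooth blocks. The paper instead feeds a smooth progress function $\psi$ of block $m-1$ into an activation score $\sigma_m$ and aggregates $\|[\boldsymbol{\sigma}]_+\|$. Weak convexity then comes from $\Lip(\nabla\psi)$, while Lipschitz continuity requires $M\le N/12$; this is where $\Delta\lesssim G^2/\rho$ actually enters.
\item \emph{Overlapping blocks.} Consecutive blocks overlap, so coordinates are not revealed in a single global order. This is why the diagonal filtration, with $O(1)$ new directions per query, is needed.
\item \emph{Stationarity certificate.} The certificate must control the proximal point, which leaves the revealed subspace. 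The paper shows its hidden coordinates equal $-\tfrac12$ times the corresponding entries of $\nabla\bar f_{1/2}(\y)$, hence are too small to complete the terminal block.
\end{enumerate}
Your ``commit later'' adversary would also have to be made as precise as the rotation-based resisting oracle for set-valued responses.
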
 
 
This lower bound matches the best known deterministic and stochastic first-order upper bounds, up to universal constants in all problem parameters, showing that the $\epsilon^{-4}$ dependence is intrinsic to nonsmooth geometry.

To prove this result, we develop a generalized zero-chain framework for
hard instances built from multiple coupled and overlapping chains. Unlike
classical zero chains, our framework does not require coordinates to be
revealed according to a single global order. Its key device is a diagonal
filtration that tracks the interleaved propagation of oracle information
across blocks. The hard instance combines two new ingredients; see the
schematic illustration in Figure~\ref{fig:overlapping-relay}.


The first ingredient addresses the stronger oracle model.  Classical
zero chain constructions \citep{CarmonEtAl2021,Nesterov2018} rely on an oracle that returns a single active
subgradient.  Under the full subdifferential oracle considered here, all
active subgradients and their convex combinations become available to
the algorithm.  To overcome this difficulty, we construct a tilted
max affine function as a convex hard block of length
\(N=\Theta({G^2}/{\epsilon^2})\).  The construction satisfies a new setwise
zero chain property,
\[
\x\in\operatorname{span}\{\e_1,\ldots,\e_k\}
\quad\Longrightarrow\quad
\partial f(\x)\subseteq
\operatorname{span}\{\e_1,\ldots,\e_{k+1}\},
\]
ensuring that every oracle query reveals only the next hidden coordinate.
This creates a within-block information barrier of length
\(\Theta(N)\). 

The second ingredient amplifies this obstruction from
\(\Theta(N)\) to \(\Theta(MN)\).  We introduce a weakly convex coupling
that links
\(M=\Theta({\rho\Delta}/{\epsilon^2})\)
tilted blocks into a single hard instance.  The coupling forces the
algorithm to traverse all \(M\) blocks while allowing neighboring blocks to
overlap. 
At the coordinate level, this overlap destroys the classical
zero-chain property.
To control the resulting nonsequential information flow, we construct
a diagonal filtration that instantiates our generalized zero-chain
framework.
As illustrated in Figure~\ref{fig:overlapping-relay}, although
neighboring blocks overlap, each diagonal layer contains only a
constant number of coordinate directions.
Consequently, every oracle query reveals only a constant number of
previously hidden coordinates, yielding a constant width analogue of
the classical zero chain.
Combining the $\Theta(N)$ within-block obstruction with the
$\Theta(M)$ between-block obstruction yields the claimed lower bound.

\begin{figure}[t]
\centering
\begin{tikzpicture}[
    x=0.46cm,
    y=0.76cm,
    cell/.style={
      draw=black!45,
      line width=0.35pt,
      minimum width=0.46cm,
      minimum height=0.38cm,
      inner sep=0pt
    },
    revealed/.style={cell,fill=blue!18},
    frontier/.style={cell,fill=orange!50},
    hidden/.style={cell,fill=black!3},
    >=Latex
]
\def\BlockLength{12}
\def\RelayDelay{4}
\def\CurrentLevel{10}
\def\TotalDepth{24}

\foreach \m in {1,2,3} {
  \pgfmathtruncatemacro{\shift}{(\m-1)*\RelayDelay}
  \pgfmathtruncatemacro{\yy}{1-\m}
  \node[anchor=east] at (-0.65,\yy+0.25) {block \(\m\)};
  \foreach \j in {1,...,\BlockLength} {
    \pgfmathtruncatemacro{\lev}{\shift+\j-1}
    \pgfmathtruncatemacro{\nextlevel}{\CurrentLevel+1}
    \ifnum\lev<\nextlevel
      \node[revealed] at (\lev+0.5,\yy+0.25) {};
    \else
      \ifnum\lev=\nextlevel
        \node[frontier] at (\lev+0.5,\yy+0.25) {};
      \else
        \node[hidden] at (\lev+0.5,\yy+0.25) {};
      \fi
    \fi
  }
}

\node at (10.8,-2.68) {\(\ddots\)};
\node[anchor=east] at (-0.65,-3.75) {block \(M\)};
\foreach \j in {1,...,\BlockLength} {
  \pgfmathtruncatemacro{\xx}{12+\j-1}
  \node[hidden] at (\xx+0.5,-3.75) {};
}

\draw[<->,line width=0.6pt]
  (0,1.05)--(\RelayDelay,1.05)
  node[midway,above] {\(L=\Theta(N)\)};
\draw[densely dotted,black!35] (0,0.84)--(0,-4.15);
\draw[densely dotted,black!35]
  (\RelayDelay,0.84)--(\RelayDelay,-4.15);

\draw[-{Latex[length=2mm]},blue!58!black,line width=0.8pt]
  (\RelayDelay-0.30,0.05)
  to[out=-36,in=145]
  (\RelayDelay+0.18,-0.72);

\pgfmathtruncatemacro{\NextLevel}{\CurrentLevel+1}
\draw[dashed,red!68!black,line width=0.75pt]
  (\NextLevel,0.72)--(\NextLevel,-2.35);
\node[anchor=south,text=red!65!black]
  at (\NextLevel,0.79) {diagonal layer};

\node[revealed,minimum width=0.32cm,minimum height=0.25cm]
  at (3.2,-4.72) {};
\node[anchor=west] at (3.55,-4.72) {revealed};
\node[frontier,minimum width=0.32cm,minimum height=0.25cm]
  at (9.1,-4.72) {};
\node[anchor=west] at (9.45,-4.72) {next layer};
\node[hidden,minimum width=0.32cm,minimum height=0.25cm]
  at (14.3,-4.72) {};
\node[anchor=west] at (14.65,-4.72) {not yet visible};

\draw[<->,line width=0.6pt]
  (0,-5.42)--(\TotalDepth,-5.42)
  node[midway,below=3pt] {total length \(\Theta(MN)\)};
\end{tikzpicture}
\caption{Schematic of the overlapping relay across blocks.}
\label{fig:overlapping-relay}
\end{figure}

Finally, we extend the lower bound from generalized zero-respecting
algorithms to arbitrary deterministic first-order methods by adapting
a classical finite horizon resisting oracle argument
\citep{NemirovskiYudin1983,carmon2020lower}.

\paragraph{Notation.}
Bold lowercase letters such as $\x,\y$ denote vectors.  Scalar coordinates are written without boldface; for example, $x_i$ is a scalar coordinate.  We write $\|\cdot\|$ for the Euclidean norm. For a positive integer $T$, we use the shorthand notation $[T] := \{1,\ldots,T\}$ to denote the index set of the first $T$ positive integers. Additionally, we denote $[0]=\varnothing$ for convenience. For integers \(d'\ge d\), we denote the set of column-orthonormal matrices by $
        \mbox{O}(d',d)
        :=
        \{\mathbf U\in\mathbb R^{d'\times d}\mid
        \mathbf U^\top\mathbf U=\bI_d\}$. 
        We use the convention that \(\inf\varnothing=+\infty\).
        For $\a\in\R^d$, write $[\a]_+:=(\max\{a_1,0\},\ldots,\max\{a_d,0\})$. Fix $M,N\in\N$. For $j\in[N]$, let $\e_j$ denote the $j$-th standard basis vector of $\R^N$; for $m\in[M]$ and $n\in[N]$, let $\e_{m,n}$ denote the corresponding
standard basis vector of the ambient space $\R^{MN}$. 
The zero vector in \(\R^d\) is denoted by \(\bz_d\); when the ambient
dimension is clear from context, we simply write \(\bz\). We denote the \(d\times d\) identity matrix by \(\bI_d\). We write $\supp(\a)$ for the support of a vector $\a$.
        
\paragraph{Structure of the paper.}
Section~\ref{sec:pre} introduces the function class, the 
stationarity measure, the full subdifferential oracle, and the algorithmic
complexity model. Section~\ref{sec:framework} states the main results and
reduces the zero-respecting lower bound to four structural properties of an
unscaled hard instance. Sections~\ref{sec:construction}
and~\ref{sec:verification} construct this hard instance and verify the required
properties. Section~\ref{sec:rotation-reduction} uses a finite horizon
rotation argument to extend the lower bound to arbitrary deterministic
first-order algorithms. Section~\ref{sec:close} concludes the paper.

\section{Preliminaries} 
\label{sec:pre}

This section introduces the function class, stationarity measure, and
full subdifferential oracle considered in this paper. We then define
generalized zero-respecting algorithms and the broader class of arbitrary
deterministic first-order algorithms. 
\begin{definition}[Function Class]
Suppose that \(G,\rho,\Delta>0\). We define \(\mathcal F(G,\rho,\Delta)\)
as the union, over dimensions \(d\in\N\), of the classes of functions
\(f:\R^d\to\R\) satisfying the following properties:
\begin{enumerate}[label=(\roman*)]
 \item (\(\rho\)-weak convexity) \(f\) is \(\rho\)-weakly convex.
 \item ($G$-Lipschitz continuity) $f$ is $G$-Lipschitz, i.e., for all $\x,\y\in \R^d$, 
 $$
 |f(\x)-f(\y)|\leq G\|\x-\y\|. 
 $$
 \item (Initial gap)  The initialization satisfies $f(\bz)-\inf_{\x\in\R^d} f(\x)\leq\Delta$.
\end{enumerate}
\end{definition}

We  introduce the stationarity measure used throughout the paper. 
\begin{definition}[Stationary point]\label{def:stationary}
Given $\epsilon>0$,  the point $\x\in\R^{d}$ is an $\epsilon$-stationary point of problem \eqref{eq:prob} if 
\[\left\|\nabla f_{1/(2\rho)}(\x)\right\|\leq {\epsilon},\]
where $f_{1/(2\rho)}(\x):=\min_{\z\in\R^d}f(\z)+\rho\|\x-\z\|^2$ is the Moreau envelope of $f$ with the parameter $\frac{1}{2\rho}$.
\end{definition} 

We consider the generalized first-order oracle (FO), denoted by $\mathbb{O}_f(\x)$. 

\begin{definition}[Oracle Model]
For each query point \(\x\in\R^d\) and each admissible \(f\), the oracle returns both the function value and the full subdifferential set, i.e.,
$\mathbb{O}_f(\x) \coloneqq (f(\x),\partial f(\x))$. 
\end{definition}

Since weakly convex functions are subdifferentially regular, all standard notions of the subdifferential coincide; see \citet{rockafellar2009variational} for details.

In this work, we first consider the class of
generalized zero-respecting algorithms interacting with oracle \(\mathbb O\), denoted by
\(\mathcal A_{\textrm{zr}}\).
To proceed, we define the support of a set $\mc S \subseteq\R^d$ as $\supp(\mc S):=\bigcup_{\g\in \mc S}\supp(\g),$ and then give the formal definition of $\mathcal{A}_{\textrm{zr}}$. 

\begin{definition}[Generalized zero-respecting algorithm]
\label{def:zero-respecting}
Starting from $\x^0=\bz$, a first-order algorithm $\sA$ is generalized zero-respecting if for each admissible $f$, its
queries $\x^0,\ldots,\x^{T-1}$ satisfy 
\[
\supp(\x^t)
 \subseteq
 \bigcup_{s<t}
 \supp\bigl(\partial f(\x^s)\bigr),\quad \forall t \in [T-1].
\]
We denote the class of such algorithms by
\(\mathcal A_{\zr}\).
\end{definition}

We next define the class of deterministic first-order algorithms $\cA_{\det}$.
\begin{definition}[Deterministic first-order algorithm]
\label{def:deterministic-algorithm}
Starting from \(\x^0=\bz\), a first-order algorithm
\(\sA\) is deterministic if there exists a sequence of maps $\Gamma_1,\ldots,\Gamma_{T-1}$ such that for each admissible $f$, its
queries $\x^0,\ldots,\x^{T-1}$ satisfy 
\[
 \x^t
 =
 \Gamma_{t}
 \left(
   \left\{
     \left(
       \mathbb O_f(\x^s)
     \right)
   \right\}_{s<t}
 \right),\quad \forall t\in[T-1].
\]
We denote the class of such algorithms by
\(\mathcal A_{\det}\).
\end{definition}

At the end of this section, we
give the definition of the complexity measure that is used throughout the rest of the paper. 


\begin{definition}[Complexity of an algorithm class]
\label{def:complexity-algorithm-class}
Let \(\cA\) be a class of deterministic first-order algorithms, and  \(\cF\) be a   function class. 
The oracle complexity of \(\cA\) on \(\cF\) is
\[
        \cT_\epsilon(\cA,\cF)
        :=
        \inf_{\sA\in\cA}
        \sup_{f\in\cF}
         \inf
        \left\{
        t\in\N\;\middle|\;
        \sA^{(t)}[f]\; \mbox{is an }\epsilon\mbox{-stationary point of }f
        \right\},
\]
where $\mathsf A^{(t)}[f]$ is the $t$-th query generated by $\sA$ when run on $f$.
\end{definition}

\section{Main results and an abstract lower bound framework}
\label{sec:framework}
\subsection{Main results}
\label{subsec:main-results}
We first state the lower bound for generalized zero-respecting algorithms.
\begin{theorem}[Zero-respecting lower bound]
\label{thm:main}
There exist numerical constants $c_0,c_1>0$ such that, for any $G,\rho,\Delta>0$ and $\epsilon>0$ satisfying $\epsilon^2\le c_0\min\{G^2,\rho \Delta\}$, there exists $f\in\cF(G,\rho,\Delta)$ such that 
\begin{equation*}
    \cT_{\epsilon}(\cA_{\zr},\{f\})\ge c_1\frac{G^2\min\{\rho\Delta,G^2\}}{\epsilon^4}.
\end{equation*}
\end{theorem}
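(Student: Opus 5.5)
The plan follows the two-ingredient blueprint of the introduction: build an unscaled hard instance $F:\R^{MN}\to\R$, verify a small list of structural properties, and then rescale. The unscaled instance has unit Lipschitz constant, weak convexity modulus $O(1)$ (in suitable units), a gap $F(\bz)-\inf F=O(M)$, and a large Moreau-envelope gradient at every point whose support has not yet reached the end of the relay. For the rescaling I will set $f(\x)=\lambda F(\x/\sigma)$ with $\lambda/\sigma=G$ (Lipschitz), $\lambda/\sigma^2\asymp\rho$ (weak convexity), and $\lambda M\asymp\Delta$ (gap). Choosing $N\asymp G^2/\epsilon^2$ and $M\asymp \max\{1,\rho\Delta/\epsilon^2\}$, capped so that $M\lesssim G^2/\epsilon^2$ when $\rho\Delta> G^2$, gives a query lower bound $\Omega(MN)=\Omega(G^2\min\{\rho\Delta,G^2\}/\epsilon^4)$. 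The hypothesis $\epsilon^2\le c_0\min\{G^2,\rho\Delta\}$ guarantees $N,M\ge1$ and keeps the stationarity threshold below the guaranteed gradient level.

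The first step is the convex block. I will take a tilted max-affine function such as $h(\y)=\max_{1\le j\le N}\{\,\delta j - y_j + \ldots\}$, or a Nesterov-type $\max_j(y_j - y_{j+1}) $ plus a small linear tilt $-\delta\sum_j y_j$ with $\delta\asymp 1/\sqrt N$. The design goal is the setwise zero-chain property: if $\y\in\operatorname{span}\{\e_1,\ldots,\e_k\}$, then every active affine piece involves only coordinates up to $k+1$, so the whole subdifferential, being the convex hull of the active gradients, lies in $\operatorname{span}\{\e_1,\ldots,\e_{k+1}\}$. The tilt must make all pieces of index $>k+1$ strictly inactive at such $\y$; this is what defeats the full-subdifferential oracle. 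In addition, I will show a quantitative fact: whenever $y_N=0$, the distance from $0$ to $\partial h$ evaluated near $\y$ is at least $\asymp 1/\sqrt N$.

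The second step couples $M$ copies with a weakly convex term, for instance a smooth gating such as $\sum_m \psi(\text{progress of block } m)\,\phi(\text{block } m{+}1)$. The gate ensures that block $m{+}1$'s chain becomes active only after block $m$ has advanced $L=\Theta(N)$ coordinates, which is what produces the overlapping relay of Figure~\ref{fig:overlapping-relay}. I will then define the diagonal filtration $\mathcal V_\ell=\operatorname{span}\{\e_{m,n}: (m-1)L+n\le \ell\}$ and prove $\x\in\mathcal V_\ell\Rightarrow \supp(\partial F(\x))\subseteq \mathcal V_{\ell+c}$ for a constant $c$. Induction on queries then gives $\x^t\in\mathcal V_{ct}$ for any $\sA\in\cA_{\zr}$. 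Consequently, for $t<(M-1)L/c$ the last block is untouched.

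The main obstacle is turning "some block is incomplete" into a lower bound of order $\epsilon$ on the Moreau-envelope gradient. Since $\nabla f_{1/(2\rho)}(\x)=2\rho(\x-\hat\x)$ and $\operatorname{dist}(0,\partial f(\hat\x))=\|\nabla f_{1/(2\rho)}(\x)\|$, it suffices to show that every point $\hat\x$ within distance $r\asymp 1$ (unscaled) of the filtration subspace has a subgradient bounded away from zero by $\asymp 1/\sqrt N$. Moreover, if $\|\hat\x-\x\|>r$ the envelope gradient is already large. This robustness near the subspace, which must also hold across the weakly convex coupling, is where I expect the delicate calculation; I would try to prove it blockwise, using the tilt to make the leading uncompleted block's pieces dominate the subgradient.
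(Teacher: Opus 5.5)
\textbf{There is a genuine gap.} Your outline has the paper's architecture: tilted max-affine blocks, a relay with delay $L=\Theta(N)$, a constant-width diagonal filtration, and a prox-point argument. But the calibration that produces $\epsilon^{-4}$ is wrong as stated, and the concrete ingredients you propose fail.

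\emph{Calibration.} Suppose the unscaled instance is $1$-Lipschitz, $O(1)$-weakly convex, and certifies Moreau gradients $\gtrsim 1/\sqrt N$. Then $\lambda/\sigma=G$ and $\lambda/\sigma^2\asymp\rho$ force $\lambda\asymp G^2/\rho$ and $N\asymp G^2/\epsilon^2$. An unscaled gap of order $M$ then only permits $M\lesssim\rho\Delta/G^2$, i.e.\ $MN\asymp\rho\Delta/\epsilon^2$, which is an $\epsilon^{-2}$ bound. To reach $M\asymp\rho\Delta/\epsilon^2$ the unscaled gap must be $O(M/N)$. Each block may lower the function by only $O(1/N)$, while subgradients stay $\gtrsim 1/\sqrt N$ and curvature stays $O(1)$. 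The paper gets this from the prefactor $\frac2N$, fan slopes $\frac N8$, intercepts $c_j=1+\frac{N-j}{16N}$, and a progress function $\psi$ with thresholds at scale $4/N$ (so $\Lip(\nabla\psi)\le\frac{3N}{8}$ and $\frac2N\psi$ has curvature $\frac34$).

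\emph{Block and coupling.} Your block candidates lack the setwise zero-chain property. An additive tilt $-\delta\sum_j y_j$ puts the full-support vector $-\delta\mathbf{1}$ into every subgradient, and it makes $h$ unbounded below along $y_j\equiv t\to\infty$ once $\delta N>1$. Intercepts increasing in $j$ expose the last piece first. The tilt has to live in strictly decreasing intercepts inside the max. The product gate $\psi(\z^m)\phi(\z^{m+1})$ is not globally weakly convex: on slices with $\z^{m+1}$ fixed its Hessian is $\phi(\z^{m+1})\nabla^2\psi(\z^m)$. Since $\psi$ is bounded and nonconstant it has negative curvature somewhere, and $\phi$ is unbounded, so the weak-convexity modulus is unbounded. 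The paper instead couples additively, $\sigma_m=\phi(\z^m)+\psi(\z^{m-1})-\frac54$, and aggregates through $\|[\boldsymbol{\sigma}]_+\|=\max_{\bm{w}\in\mathcal W}\langle \bm{w},\boldsymbol{\sigma}\rangle$. Each branch is then convex plus a smooth term of curvature at most $\frac34$.

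\emph{Stationarity step, radius.} Since $\nabla\bar f_{1/2}(\y)=2(\y-\z)\in\partial\bar f(\z)$ with $\z$ the prox point, the only radius that matters is $\asymp 1/\sqrt N$; the paper uses $\|\y-\z\|\le\frac{1}{16\sqrt N}$. A radius-$\Theta(1)$ robustness claim is false for instances calibrated this way. In the paper's instance, setting every coordinate to $8.5/N$ gives a global minimizer of $\bar f$ within distance $9/\sqrt N$ of $\mathcal V_{H_{M,N}-1}$. At the right radius, the paper keeps half of block $M$ hidden and uses $q(t)\le 3t_+^2$ to get $\psi(\z^M)\le\frac12+\frac{3N}{64}\|\bm{d}\|^2<q(\frac34)$. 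Then the first block $\widehat m$ with $\psi(\z^{\widehat m})<q(\frac34)$ has $\sigma_{\widehat m}(\z)>\frac{7}{32}$.

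\emph{Stationarity step, cancellation.} A ``dominance'' argument has no room. The $\psi$-terms contribute to block $m$ a vector $\frac2N\alpha_m\nabla\psi(\z^m)$ with $|\alpha_m|\le 1$, not controlled by $w_m$, of norm up to $\frac{3}{4\sqrt N}$. The fan part is only $\gtrsim w_m/(4\sqrt N)$, and $\|\bm{w}\|=1$ may be spread over many active blocks. The missing idea is a sign structure that forbids cancellation. The reward $-\sum_m\psi(\z^m)$ in $\bar f$ makes the net coefficient $-(1-w_{m+1})\le 0$, so $w_m\bm{u}_m$ and $-a_m\nabla\psi(\z^m)$ are both coordinatewise nonpositive. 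With $\|\bm{u}_m\|_1=\frac N8$ this gives $\|\g^m\|\ge w_m/(4\sqrt N)$. The active weight $\bm{w}=[\boldsymbol{\sigma}]_+/R$ is unique with $\|\bm{w}\|=1$, so $\|\g\|\ge 1/(4\sqrt N)$ for every $\g\in\partial\bar f(\z)$. The same reward term is what forces descent through all $M$ blocks rather than stopping after block $1$, and it is what makes the gap $\Theta(M/N)$.
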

We next state our main result, which extends the lower bound from generalized zero-respecting algorithms to all deterministic first-order methods. 
\begin{theorem}[Deterministic lower bound]
\label{thm:det}
There exist numerical constants $c_0,c_1>0$ such that, for any $G,\rho,\Delta>0$ and $\epsilon>0$ satisfying $\epsilon^2\le c_0\min\{G^2,\rho \Delta\}$, it holds that 
\begin{equation*}
    \cT_{\epsilon}(\cA_{\det},\cF(G,\rho,\Delta))\ge c_1\frac{G^2\min\{\rho\Delta,G^2\}}{\epsilon^4}.
\end{equation*}
\end{theorem}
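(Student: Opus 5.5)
The plan is to reduce Theorem~\ref{thm:det} to Theorem~\ref{thm:main} with a finite-horizon resisting-oracle (random/adversarial rotation) argument in the spirit of \citet{NemirovskiYudin1983,carmon2020lower}. Let $\bar f:\R^{d}\to\R$ be the hard instance behind Theorem~\ref{thm:main}, with $d=MN$, $M=\Theta(\rho\Delta/\epsilon^2)$, $N=\Theta(G^2/\epsilon^2)$, and let $T=c_1G^2\min\{\rho\Delta,G^2\}/\epsilon^4$. Fix an arbitrary $\sA\in\cA_{\det}$ and a larger ambient dimension $d'\ge d+T$. For $\bU\in \mbox{O}(d',d)$ consider
\[
f_{\bU}(\x):=\bar f(\bU^\top\x)+\tfrac{\rho}{2}\,\phi\bigl(\x\bigr)\quad\text{or simply}\quad f_{\bU}(\x):=\bar f(\bU^\top \x).
\]
I will first try the simple form $f_{\bU}=\bar f\circ\bU^\top$. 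Then $G$-Lipschitzness, $\rho$-weak convexity and the gap $f_{\bU}(\bz)-\inf f_{\bU}\le\Delta$ transfer immediately, because $\bU^\top$ is a contraction onto $\R^d$ and is surjective. Moreover $\partial f_{\bU}(\x)=\bU\,\partial\bar f(\bU^\top\x)$ by the chain rule for regular functions. The Moreau envelope also commutes with the rotation in the relevant directions: writing $\x=\bU\y+\x_\perp$, the prox of $f_{\bU}$ leaves $\x_\perp$ fixed, so $\nabla (f_{\bU})_{1/(2\rho)}(\x)=\bU\nabla\bar f_{1/(2\rho)}(\bU^\top\x)$. Stationarity of $\x$ for $f_{\bU}$ is therefore equivalent to stationarity of $\bU^\top\x$ for $\bar f$.

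The core step builds $\bU$ adaptively, column by column, against $\sA$. This is possible because $\sA$ is deterministic and each query depends only on past oracle answers. Inductively, after $t$ queries I will have fixed the columns $\bU_{\cdot,i}$ for the coordinates $i$ revealed so far, meaning those in the diagonal-filtration layers reached. I then choose each newly revealed column orthogonal to all past queries $\x^0,\dots,\x^{t}$ and to the previously fixed columns. This is feasible since $d'\ge d+T$.

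The desired consequence is that $\bU^\top\x^t$ has support only in the already revealed coordinates, plus components along unrevealed columns that are orthogonal to every query issued so far, hence zero. So the sequence $\y^t=\bU^\top\x^t$ behaves like the iterates of a generalized zero-respecting algorithm on $\bar f$: $\supp(\y^t)\subseteq\bigcup_{s<t}\supp(\partial\bar f(\y^s))$. The oracle answers $(f_{\bU}(\x^s),\bU\partial\bar f(\y^s))$ depend only on columns already fixed, so the construction is consistent. Because the setwise zero-chain property controls the full subdifferential, the answers never involve unfixed columns. Applying Theorem~\ref{thm:main} then shows that no $\y^t$ with $t<T$ is $\epsilon$-stationary for $\bar f$, hence no $\x^t$ is $\epsilon$-stationary for $f_{\bU}$.

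The main obstacle is that $\y^t$ need not have exactly zero components in unrevealed coordinates. Theorem~\ref{thm:main} is stated for zero-respecting iterates, yet an arbitrary algorithm can place mass in directions orthogonal to all fixed columns but not yet orthogonal to future ones. The orthogonal-to-all-past-queries choice resolves this exactly, with no probabilistic "near-orthogonality" needed, since columns are chosen after the queries are known.

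I must still check one point: the chain's "revealed set" for $\bar f$ at $\y^t$ depends only on $\supp(\y^t)$ through the diagonal filtration. This lets me invoke the structural properties from Section~\ref{sec:framework} at points whose support lies in the revealed layers, rather than just along zero-respecting trajectories. If Theorem~\ref{thm:main}'s proof uses exactly those properties, namely the setwise zero chain and large Moreau gradient whenever the last layers are unrevealed, the reduction closes. If not, I will restate its core lemma in that support-based form.
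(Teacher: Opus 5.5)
Your proposal is correct and follows essentially the same route as the paper. The paper's Lemma~\ref{lem:finite-horizon-resisting-oracle} builds $\bU\in\operatorname O(d+T_0,d)$ column by column in the same way, choosing new columns orthogonal to all past queries and to previously fixed columns, and completing the never-revealed columns orthogonally to all $T_0$ queries at the end; it packages $\y^t=\bU^\top\x^t$ as the trajectory of a genuine generalized zero-respecting algorithm and then combines Theorem~\ref{thm:main} with Lemmas~\ref{lem:isometric-invariance} and~\ref{lem:isometric-identities}. The only cosmetic differences are that the instance to rotate is the scaled $\frac{G^2}{\rho}\bar f(\frac{\rho}{G}\,\cdot)$ rather than $\bar f$ itself, and that the paper's resisting-oracle step fixes columns after seeing $\partial f(\z^t)$, so it works for any $f$ and the setwise zero-chain property is used only inside Theorem~\ref{thm:main}.
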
 

\begin{remark}[Arbitrary deterministic output]
The lower bound in Theorem~\ref{thm:det} continues to hold when the deterministic algorithm, denoted by $\sA$, may return an arbitrary deterministic point ${\widehat{\x}}_T$ based on the transcript of its first $T$ oracle calls. Define an augmented deterministic algorithm
\(\sA^+\) whose first \(T\) queries agree with those of \(\sA\) and
whose next query is
$
    (\sA^+)^{(T)}[f]:=\widehat{\x}_T.
$
Since \(\widehat{\x}_T\) is a deterministic function of the preceding
transcript, \(\sA^+\in\cA_{\det}\). For every integer $T<c_1\frac{G^2\min\{\rho\Delta,G^2\}}{\epsilon^4}$, Theorem~\ref{thm:det}, applied to \(\sA^+\), yields a corresponding hard instance \(f_{\sA^+}\) such that
\[
\left\|
\nabla (f_{\sA^+})_{1/(2\rho)}
\bigl(\widehat{\x}_T\bigr)
\right\|>\epsilon.
\]
Consequently, no such deterministic algorithm can guarantee an
\(\epsilon\)-stationary output over \(\cF(G,\rho,\Delta)\) using fewer
than $c_1\frac{G^2\min\{\rho\Delta,G^2\}}{\epsilon^4}$
oracle calls.
\end{remark}

\begin{remark}[Role of the weak convexity modulus]
\citet{tian2021hardness} showed that, for every finite oracle horizon \(T\), one can
choose a finite weak convexity modulus \(\rho(T)\) for which \(T\) queries
are insufficient. Consequently, the unbounded union
$
 \bigcup_{\rho>0}\mathcal F(G,\rho,\Delta)
$
admits no uniform finite time oracle complexity guarantee. This reveals that
controlling \(\rho\) is essential for the tractability of weakly convex
optimization.
\end{remark}
We then give an abstract lower bound framework in the next subsection. 
Theorem~\ref{thm:main} is then established through this framework and the 
hard instance construction and verification in
Sections~\ref{sec:construction} and~\ref{sec:verification}.
Theorem~\ref{thm:det} is then obtained from
Theorem~\ref{thm:main} through a classical adversarial rotation
argument in  
Section~\ref{sec:rotation-reduction}; see \citet{NemirovskiYudin1983,carmon2020lower,woodworth2016tight} for details.
\subsection{An abstract lower bound framework} 
We next reduce Theorem~\ref{thm:main} to four properties of an unscaled
hard instance. 
We call a pair of integers \((M,N)\) admissible if $ N\geq60$ and $
 2\leq M\leq\frac N{12}.
$
Fix an admissible pair \((M,N)\).  We consider an unscaled hard instance
$
 \bar f:\mathbb R^{MN}\to\mathbb R,
$ 
a positive integer \(H_{M,N}\) representing the oracle horizon, and a nested
coordinate filtration
\[
 \{\bz \}=\mathcal V_{-1} \subseteq \mathcal V_0
 \subseteq\mathcal V_1
 \subseteq\cdots
 \subseteq\mathbb R^{MN}.
\] More precisely, for every \(t\geq-1\), there is an index set
\(\mathcal I_t\subseteq[M]\times[N]\) such that
\[
 \mathcal V_t
 =
 \operatorname{span}
 \bigl\{\e_{m,j}\mid (m,j)\in\mathcal I_t\bigr\},
 \qquad
 \mathcal I_t\subseteq\mathcal I_{t+1},
 \quad \text{and} \quad
 \mathcal I_{-1}=\varnothing.
\]
We impose four conditions on
\(\bar f\), \(\{\mathcal V_t\}_{t\geq-1}\), and \(H_{M,N}\).
The proposition following these conditions shows that any family satisfying
them yields Theorem~\ref{thm:main}.  The next section constructs such an instance
and verifies the four conditions.
\begin{condition}[Unscaled hard instance properties] \label{cond:normalized-hard-instance} 
The function \(\bar f\), the coordinate filtration
\(\{\mathcal V_t\}_{t\geq-1}\), and the horizon \(H_{M,N}\) satisfy the
following properties:
\begin{enumerate}[
  label=\textup{(C\arabic*)},
  ref=\textup{(C\arabic*)},
  leftmargin=*,
  itemsep=0.6em
]
\item The function \(\bar f\) is finite-valued, bounded below,
\(1\)-Lipschitz, and \(1\)-weakly convex.
\item For every $t\geq-1$ and every $\x\in\mc V_t$,
$
 \partial\bar f(\x)\subseteq\mc V_{t+1}.
$ 

\item\label{cond:terminal-failure}
The horizon satisfies
$
 H_{M,N}\geq \frac{MN}{120},
$
and every \(\x \in\mathcal V_{H_{M,N}-1}\) satisfies
\[
 \left\|\nabla \bar f_{1/2}(\x)\right\|
 >
 \frac{1}{8\sqrt N}.
\]
\item\label{cond:initial-gap}
The initial gap satisfies $\bar f(\bz) - \inf \bar f \leq 3 \tfrac{M}{N}$. 
\end{enumerate}
\end{condition}
The four conditions enter the reduction in two different ways. Condition~\ref{cond:normalized-hard-instance} (C2) controls the information available to a zero-respecting algorithm, while Condition~\ref{cond:normalized-hard-instance} (C3) rules out approximate stationarity throughout the resulting reachable region. Condition~\ref{cond:normalized-hard-instance} (C1) and (C4) are used to scale the base instance into the target function class. We first record the scaling identities and then combine the four certificates.

\begin{lemma}[Exact scaling]
\label{lem:scaling}
Let \(G,\rho>0\), let \(\bar f:\R^{MN}\to\R\) be finite-valued, bounded below,
\(1\)-Lipschitz, and \(1\)-weakly convex, and define
\[
 f(\x)
 :=
 \frac{G^2}{\rho}
 \bar f\left(\frac{\rho}{G}\x\right),
 \quad \text{and} \quad 
 \z:=\frac{\rho}{G}\x.
\]
Then \(f\) is \(G\)-Lipschitz and \(\rho\)-weakly convex.  Moreover, we have $\partial f(\x)=
 G\,\partial\bar f(\z)$ and $
 \nabla f_{1/(2\rho)}(\x)= G\,\nabla \bar f_{1/2}(\z)$. 
In particular,
$
 \supp\bigl(\partial f(\x)\bigr)
 =
 \supp\bigl(\partial\bar f(\z)\bigr).
$\end{lemma}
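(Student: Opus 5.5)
The plan is to verify each claim directly from the definitions by a change of variables, with $\lambda:=\rho/G$ and $f(\x)=\frac{G^2}{\rho}\bar f(\lambda\x)$.

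\emph{Lipschitz continuity.} For any $\x,\y$,
\[
|f(\x)-f(\y)|\le \tfrac{G^2}{\rho}\,\|\lambda\x-\lambda\y\|=G\|\x-\y\|.
\]

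\emph{Weak convexity.} Since $\bar f$ is $1$-weakly convex, $\bar f+\tfrac12\|\cdot\|^2$ is convex. Composing with the linear map $\x\mapsto\lambda\x$ and multiplying by $G^2/\rho>0$ preserves convexity, so
\[
\tfrac{G^2}{\rho}\Bigl(\bar f(\lambda\x)+\tfrac12\lambda^2\|\x\|^2\Bigr)=f(\x)+\tfrac{\rho}{2}\|\x\|^2
\]
is convex. Hence $f$ is $\rho$-weakly convex.

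\emph{Subdifferential.} Since $\bar f$ is weakly convex it is regular, and all subdifferential notions coincide. I would write $\partial\bar f(\z)=\partial\bigl(\bar f+\tfrac12\|\cdot\|^2\bigr)(\z)-\z$. The convex chain rule for the invertible scaling $\x\mapsto\lambda\x$ then gives $\partial(h\circ\lambda I)(\x)=\lambda\,\partial h(\lambda\x)$. Applying this to the convex function $h=\bar f+\tfrac12\|\cdot\|^2$ and subtracting the quadratic part yields
\[
\partial f(\x)=\tfrac{G^2}{\rho}\,\lambda\,\partial\bar f(\z)=G\,\partial\bar f(\z).
\]
Alternatively, one can check the Fréchet subgradient inequality directly, since it transforms exactly under positive scaling of values and arguments. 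Because $G>0$, multiplying by $G$ does not change the support of any vector, so $\supp(\partial f(\x))=\supp(\partial\bar f(\z))$.

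\emph{Moreau envelope.} Substitute $\mathbf{w}=\lambda\mathbf{u}$:
\begin{align*}
f_{1/(2\rho)}(\x)&=\min_{\mathbf{u}}\ \tfrac{G^2}{\rho}\bar f(\lambda\mathbf{u})+\rho\|\x-\mathbf{u}\|^2\\
&=\tfrac{G^2}{\rho}\min_{\mathbf{w}}\Bigl(\bar f(\mathbf{w})+\|\z-\mathbf{w}\|^2\Bigr),
\end{align*}
using $\rho\|\x-\mathbf{u}\|^2=\rho\lambda^{-2}\|\z-\mathbf{w}\|^2=\tfrac{G^2}{\rho}\|\z-\mathbf{w}\|^2$. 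Therefore $f_{1/(2\rho)}(\x)=\tfrac{G^2}{\rho}\bar f_{1/2}(\lambda\x)$.

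The envelope $\bar f_{1/2}$ is differentiable because the inner objective is $1$-strongly convex: $\bar f$ is $1$-weakly convex and the quadratic has modulus $2$. Boundedness below of $\bar f$ ensures the minimum is attained and finite. Differentiating by the chain rule gives
\[
\nabla f_{1/(2\rho)}(\x)=\tfrac{G^2}{\rho}\,\lambda\,\nabla\bar f_{1/2}(\z)=G\,\nabla\bar f_{1/2}(\z).
\]

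The only step needing care is the subdifferential chain rule for a nonconvex function. Reducing to the convex function $\bar f+\tfrac12\|\cdot\|^2$, as above, removes that difficulty.
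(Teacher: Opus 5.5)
Your proposal is correct and follows the paper's proof essentially step for step. It uses the same direct Lipschitz estimate, the same identity $f(\x)+\frac{\rho}{2}\|\x\|^2=\frac{G^2}{\rho}\bigl(\bar f(\z)+\frac12\|\z\|^2\bigr)$ for weak convexity, and the same chain rule on the shifted convex functions for the subdifferential. The one minor variation is the Moreau step: you scale the envelope value, $f_{1/(2\rho)}(\x)=\frac{G^2}{\rho}\bar f_{1/2}(\z)$, and differentiate, whereas the paper scales the proximal point and uses $\nabla f_{1/(2\rho)}(\x)=2\rho\bigl(\x-\prox_{1/(2\rho)f}(\x)\bigr)$; both rest on the same substitution $\mathbf{w}=\frac{\rho}{G}\mathbf{u}$.
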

\begin{proof}[Proof of Lemma~\ref{lem:scaling}]
    For every \(\x,\y\in\R^{MN}\),
\[
 |f(\x)-f(\y)|
 \leq
 \frac{G^2}{\rho}
 \left\|
   \frac{\rho}{G}(\x-\y)
 \right\|
 =
 G\norm{\x-\y}.
\]
Moreover,
\[
 f(\x)+\frac{\rho}{2}\norm{\x}^2
 =
 \frac{G^2}{\rho}
 \left(
   \bar f(\z)+\frac12\norm{\z}^2
 \right),
\]
so \(f\) is \(\rho\)-weakly convex. Applying the chain rule to the
shifted functions gives $\partial f(\x)=G\,\partial\bar f(\z),$ which also proves the equality of the corresponding supports. 

For the proximal identity, we have 
\begin{align*}
 \nabla f_{1/(2\rho)}(\x)
 &=
 2\rho
 \left(
   \x-\prox_{1/(2\rho)f}(\x)
 \right)\\
 &=
 2\rho
 \left(
   \frac{G}{\rho}\z
   -
   \frac{G}{\rho}\prox_{1/2\bar f}(\z)
 \right)\\
 &=
 G\left(
   2\z-2\prox_{1/2\bar f}(\z)
 \right)\\
 &=
 G\,\nabla \bar f_{1/2}(\z).
\end{align*}
We complete the proof. 
\end{proof}

 We now combine the hard instance properties with the preceding scaling
identities. 

\begin{proposition}
\label{prop:framework-reduction}
Suppose that for every admissible pair \((M,N)\), there exist a function
\(\bar f:\R^{MN}\to\R\), a nested coordinate filtration
\(\{\mc V_t\}_{t\geq-1}\), and an integer \(H_{M,N}\in\mathbb N_+\)
satisfying Condition \ref{cond:normalized-hard-instance}. Then Theorem
\ref{thm:main} holds with
$
 c_0=\frac1{7680}$ and $
 c_1=\frac1{47185920}.
$ 
\end{proposition}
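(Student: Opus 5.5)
The plan is to instantiate the assumed hard instance $\bar f$ at an admissible pair $(M,N)$ chosen as a function of $(G,\rho,\Delta,\epsilon)$, rescale it with Lemma~\ref{lem:scaling}, and then show that every generalized zero-respecting algorithm stays inside the non-stationary region $\mathcal V_{H_{M,N}-1}$ for its first $H_{M,N}$ queries.

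\textbf{Choice of parameters.} By Lemma~\ref{lem:scaling} and (C3), every $\x$ with $\frac{\rho}{G}\x\in\mathcal V_{H_{M,N}-1}$ satisfies $\|\nabla f_{1/(2\rho)}(\x)\|=G\|\nabla\bar f_{1/2}(\tfrac{\rho}{G}\x)\|>G/(8\sqrt N)$. I therefore take
\[
N:=\Bigl\lfloor \tfrac{G^2}{64\epsilon^2}\Bigr\rfloor,
\]
which gives $G/(8\sqrt N)\ge\epsilon$. For the gap, (C4) gives $f(\bz)-\inf f=\frac{G^2}{\rho}(\bar f(\bz)-\inf\bar f)\le \frac{3G^2M}{\rho N}$. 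So I take
\[
M:=\min\Bigl\{\Bigl\lfloor \tfrac{N}{12}\Bigr\rfloor,\ \Bigl\lfloor \tfrac{\rho\Delta N}{3G^2}\Bigr\rfloor\Bigr\},
\]
which makes the gap at most $\Delta$. Admissibility needs $N\ge 60$ and $M\ge2$.
\begin{itemize}
\item $N\ge 60$ follows from $\epsilon^2\le G^2/(64\cdot 61)$.
\item $\lfloor N/12\rfloor\ge 2$ follows from $N\ge 60$.
\item $\lfloor \rho\Delta N/(3G^2)\rfloor\ge2$ holds once $N\ge 6G^2/(\rho\Delta)$. Since $N\ge G^2/(128\epsilon^2)$, this is implied by $\epsilon^2\le\rho\Delta/768$.
\end{itemize}
Both requirements are covered by $c_0=1/7680$. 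By (C1) and Lemma~\ref{lem:scaling}, the rescaled $f$ is $G$-Lipschitz and $\rho$-weakly convex, so $f\in\cF(G,\rho,\Delta)$.

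\textbf{Information barrier.} Fix $\sA\in\cA_{\zr}$ run on $f$, and write $\z^t=\frac{\rho}{G}\x^t$. Because scaling preserves supports and each $\mathcal V_t$ is a coordinate subspace, $\x^t\in\mathcal V_{t-1}$ is equivalent to $\z^t\in\mathcal V_{t-1}$. I prove $\z^t\in\mathcal V_{t-1}$ by induction on $t$.
\begin{itemize}
\item Base case: $\z^0=\bz\in\mathcal V_{-1}$.
\item Step: suppose $\z^s\in\mathcal V_{s-1}$ for all $s<t$. By (C2), $\partial\bar f(\z^s)\subseteq\mathcal V_s$, so $\supp(\partial f(\x^s))=\supp(\partial\bar f(\z^s))\subseteq\mathcal I_s\subseteq\mathcal I_{t-1}$. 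Definition~\ref{def:zero-respecting} then gives $\supp(\x^t)\subseteq\mathcal I_{t-1}$, that is, $\z^t\in\mathcal V_{t-1}$.
\end{itemize}
Hence for every $t\le H_{M,N}-1$ we have $\z^t\in\mathcal V_{H_{M,N}-1}$ by nestedness. By (C3) and the choice of $N$, $\|\nabla f_{1/(2\rho)}(\x^t)\|>\epsilon$. So no $\epsilon$-stationary point is found before iteration $H_{M,N}$, and $\cT_\epsilon(\cA_{\zr},\{f\})\ge H_{M,N}\ge MN/120$.

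\textbf{Final bound and the main difficulty.} It remains to lower bound $MN$ by a multiple of $G^2\min\{\rho\Delta,G^2\}/\epsilon^4$, and the only real obstacle is the floors. Since $N\ge 60$, we have $N\ge G^2/(128\epsilon^2)$. Since each quantity inside a floor defining $M$ is at least $2$, each floor is at least half its argument. Thus
\[
M\ge \tfrac12\min\Bigl\{\tfrac{N}{12},\ \tfrac{\rho\Delta N}{3G^2}\Bigr\}\ge \tfrac{N}{24G^2}\min\{G^2,\rho\Delta\}.
\]
Therefore
\[
\frac{MN}{120}\ \ge\ \frac{N^2\min\{G^2,\rho\Delta\}}{2880\,G^2}\ \ge\ \frac{G^2\min\{\rho\Delta,G^2\}}{2880\cdot128^2\,\epsilon^4}.
\]
Tracking these factors (and possibly a slightly more conservative floor estimate) should reproduce the stated constant $c_1=1/47185920$.
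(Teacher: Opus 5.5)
Your proposal is correct and follows essentially the same route as the paper. It uses the same $N=\lfloor G^2/(64\epsilon^2)\rfloor$, the rescaling via Lemma~\ref{lem:scaling}, the same induction $\z^t\in\mathcal V_{t-1}$ from (C2), and (C3) to certify non-stationarity. Your $M=\min\{\lfloor N/12\rfloor,\lfloor \rho\Delta N/(3G^2)\rfloor\}$ differs from the paper's $\lfloor N\min\{\rho\Delta/G^2,1\}/12\rfloor$ only in spending the gap budget exactly, rather than leaving the paper's slack of $\Delta/4$. Your final chain already yields the stated constant with no extra conservatism, since $2880\cdot 128^2=47185920$ exactly.
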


\begin{proof}[Proof of Proposition~\ref{prop:framework-reduction}]
Fix \(G,\rho,\Delta,\epsilon>0\) satisfying
$
 \epsilon^2
 \leq
 c_0\min\{G^2,\rho\Delta\},
$ and define \[
 r:=\min\left\{\frac{\rho\Delta}{G^2},1\right\},
 \qquad
 N:=\left\lfloor\frac{G^2}{64\epsilon^2}\right\rfloor,
 \quad \text{and} \quad
 M:=\left\lfloor\frac{Nr}{12}\right\rfloor.
\]
Since $c_0=\tfrac{1}{7680}$, we have 
$ \frac{\min\{G^2,\rho\Delta\}}{\epsilon^2}
 \geq7680,
$
and hence $\frac{G^2}{64\epsilon^2}\geq120.$ Using \(\lfloor u\rfloor\geq u/2\) for \(u\geq2\), we obtain
$ N\geq\frac{G^2}{128\epsilon^2}\geq60
$
and
$
 Nr
 \geq
 \frac{\min\{G^2,\rho\Delta\}}{128\epsilon^2}
 \geq60.
$
Consequently,
$ 5 \leq M\leq\frac N{12}.
$

Define
\[
 f(\x)
 :=
 \frac{G^2}{\rho}
 \bar f\left(\frac{\rho}{G}\x\right).
\]
Condition~\ref{cond:normalized-hard-instance} (C1) and Lemma \ref{lem:scaling} show that \(f\) is
\(G\)-Lipschitz and \(\rho\)-weakly convex.
Condition~\ref{cond:normalized-hard-instance} (C4) implies that 
\[
 f(\bz)-\inf f\leq \frac{G^2}{\rho}(\bar f(\bz) -\inf \bar f )
 \leq
 \frac{G^2}{\rho}\frac{3M}{N}
 \leq
 \frac{G^2}{\rho}\frac r4
 \leq
 \frac{\Delta}{4}\leq \Delta.
\]
Thus,
$
 f\in\mathcal F (G,\rho,\Delta).
$

Let a deterministic generalized zero-respecting algorithm query
\(\x^0,\ldots,\x^{T-1}\).  Define 
$
 \z^t:=\frac{\rho}{G}\x^t$. Positive coordinate scaling preserves support.  Lemma~\ref{lem:scaling}
also gives
$
 \supp\bigl(\partial f(\x^t)\bigr)
 =
 \supp\bigl(\partial\bar f(\z^t)\bigr).
$ 
Hence \(\z^0,\ldots,\z^{T-1}\)  satisfy the generalized
zero-respecting rule for \(\bar f\). We claim that $ \z^t\in\mc V_{t-1},
 \forall t=0,\ldots,T-1$. 
Indeed, \(\z^0=0\in\mc V_{-1}\). Suppose that
\(\z^s\in\mc V_{s-1}\) for every \(s<t\). By
 Condition \ref{cond:normalized-hard-instance} (C2), \[
 \partial\bar f(\z^s)
 \subseteq
 \mc V_s
 \subseteq
 \mc V_{t-1},
 \qquad s<t.
\]
Because \(\mc V_{t-1}\) is a coordinate subspace, the full set
zero-respecting rule implies
$
 \z^t\in\mc V_{t-1}.
$ 
If \(T\leq H_{M,N}\), then \(\z^t\in\mc V_{H_{M,N}-1}\) for all \(t=0,\ldots,T-1\).
Condition~\ref{cond:normalized-hard-instance} (C3) and Lemma \ref{lem:scaling} therefore give
\[
   \min_{0\leq t<T}\norm{\nabla f_{1/(2\rho)}(\x^t)} 
 >
 \frac{G}{8\sqrt N}.
\]
Since
$
 N\leq\frac{G^2}{64\epsilon^2},
$
the right-hand side is at least \(\epsilon\).

It remains to lower-bound the horizon. 
Condition~\ref{cond:normalized-hard-instance} (C3) yields
\begin{align*}
 H_{M,N} \geq\frac{MN}{120}\geq\frac{N^2r}{2880}\geq
 \frac{1}{47185920}
 \frac{G^2\min\{\rho\Delta,G^2\}}{\epsilon^4}.
\end{align*}
Combining the preceding arguments and the above parameter estimates, we obtain
\(f\in\cF(G,\rho,\Delta)\) and
\[
\cT_{\epsilon}\bigl(\cA_{\zr},\{f\}\bigr)
\ge H_{M,N}
\ge
\frac{1}{47185920}
\frac{G^2\min\{\rho\Delta,G^2\}}{\epsilon^4}.
\]
Therefore, Theorem \ref{thm:main} follows with $
    c_0=\frac{1}{7680}
    \mbox{ and }
    c_1=\frac{1}{47185920}.$
This completes the proof.
\end{proof}

Proposition \ref{prop:framework-reduction} reduces Theorem \ref{thm:main} to the construction
of an unscaled hard instance satisfying Condition \ref{cond:normalized-hard-instance}.  We construct such an instance in the next section.

\section{The construction of our hard instance}
\label{sec:construction}
We identify \(\R^{M\times N}\) with \(M\) ordered blocks of \(N\)
coordinates and write
\[
 \z
 =
 \bigl(\z^1,\ldots,\z^M\bigr),
 \quad \text{and} \quad 
 \z^m
 =
 \bigl(z_1^m,\ldots,z_N^m\bigr)\in\R^N.
\]
Here, \(m\in[M]\) indexes the blocks, while \(j\in[N]\) indexes the
coordinates within each block. Coordinates are revealed progressively
within each block, while the activation of consecutive blocks is staggered
but may overlap. These two mechanisms produce the \(M\times N\) information
horizon required in Condition~\ref{cond:normalized-hard-instance}.

The construction proceeds in two stages.  We first introduce a convex
tilted max-affine block whose entire subdifferential satisfies a one-step
locality property.  We then couple \(M\) copies of this block so that
progress in one block is required before the next block can be activated.
The resulting function satisfies the generalized zero-chain property with
respect to a diagonal coordinate filtration, which is the key mechanism of
the construction.

\subsection{A tilted max-affine block} 
Define the strictly decreasing intercepts
$
 c_j
 :=
 1+\tfrac{N-j}{16N},$
for all $j\in[N]$.
Thus,
$
 c_1>c_2>\cdots>c_N=1.
$ Define the tilted max-affine function \[
 \phi(\bm{u})
 :=
 \max\left\{
   0,\,
   \max_{j\in[N]}
   \left(c_j-\frac{N}{8}u_j\right)
 \right\},
 \qquad
 \bm{u}\in\mathbb R^N,
\]
which is different from the classical hard instance for convex nonsmooth optimization~\citep{WoodworthSrebro2017,DiakonikolasGuzman2020}.
The strict ordering of the intercepts ensures that, among all unrevealed
affine pieces, only the first one can be active. We formalize this observation here. 
\begin{lemma}
\label{lem:tilted-block-locality}
For every \(k\in\{0,\ldots,N-1\}\), we have 
\[
 \bm{u}\in\operatorname{span}\{\e_1,\ldots,\e_k\}
 \quad\Longrightarrow\quad
 \partial\phi(\bm{u})
 \subseteq
 \operatorname{span}\{\e_1,\ldots,
\e_{k+1}\}.
\]
\end{lemma}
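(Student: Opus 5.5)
The plan is to use the formula for the subdifferential of a finite pointwise maximum of affine functions and to show which pieces can be active. Since $\phi$ is the maximum of the affine functions $\ell_0(\bm u):=0$ and $\ell_j(\bm u):=c_j-\frac N8 u_j$ for $j\in[N]$, it is convex. Its subdifferential at $\bm u$ is the convex hull of the gradients of the active pieces:
\[
\partial\phi(\bm u)=\operatorname{conv}\bigl\{\nabla\ell_i \mid i\in A(\bm u)\bigr\},\qquad A(\bm u):=\bigl\{i\in\{0,\ldots,N\}\mid \ell_i(\bm u)=\phi(\bm u)\bigr\}.
\]
Here $\nabla\ell_0=\bz$ and $\nabla\ell_j=-\frac N8\e_j$. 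Because a span is convex, it suffices to show that no index $j\ge k+2$ lies in $A(\bm u)$ whenever $\bm u\in\operatorname{span}\{\e_1,\ldots,\e_k\}$.

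Fix such a $\bm u$, so that $u_j=0$ for every $j>k$. For these indices we get $\ell_j(\bm u)=c_j$. Since $k+1\le N$, the piece $\ell_{k+1}$ exists and gives the lower bound
\[
\phi(\bm u)\ge\ell_{k+1}(\bm u)=c_{k+1}.
\]
For any $j\ge k+2$, strict monotonicity of the intercepts gives $\ell_j(\bm u)=c_j<c_{k+1}\le\phi(\bm u)$, so $j\notin A(\bm u)$. Also $c_{k+1}\ge 1>0$, so $\phi(\bm u)>0$ and the zero piece $\ell_0$ is not active either. Hence $A(\bm u)\subseteq[k+1]$.

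It follows that $\partial\phi(\bm u)$ is a convex hull of vectors $-\frac N8\e_j$ with $j\le k+1$. This set lies in $\operatorname{span}\{\e_1,\ldots,\e_{k+1}\}$, which is the claim.

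The main point needing care is the subdifferential formula. It is standard for a finite max of affine (or smooth convex) functions, for instance via Danskin's theorem or the max-rule in \citet{rockafellar2009variational}. The statement involves the whole set $\partial\phi(\bm u)$, so we need the exact convex-hull characterization, not merely the fact that each active gradient is a subgradient. The inclusion direction is all that is required, and it holds because the inactive pieces stay strictly below $\phi$ in a neighborhood of $\bm u$, so they do not affect the local behavior of $\phi$. Beyond this, the argument is the one-line comparison $c_j<c_{k+1}$.
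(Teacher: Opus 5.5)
Your proof is correct and uses the same argument as the paper, which gives it inline in cases (ii)--(iii) of the proof of Proposition~\ref{prop:diagonal-locality}: on $\operatorname{span}\{\e_1,\ldots,\e_k\}$ the hidden pieces take the values $c_{k+1}>c_{k+2}>\cdots>c_N$, so only facet $k+1$ can be active among them, and the convex-hull formula for the subdifferential of a finite max of affine functions gives the inclusion. You handle $k=0$ uniformly rather than as a separate case at the origin, and your check that the zero piece is inactive is harmless but unnecessary, since its gradient $\bz$ lies in every span.
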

The tilted max-affine block therefore enforces the required full set
zero-chain property within a single block, but it can hide only \(N\)
coordinates. Taking \(M\) independent copies does not produce an
\(MN\)-step lower bound, since a zero-respecting algorithm could explore
all blocks in parallel. We therefore couple consecutive blocks so that
block \(m\) can become active only after sufficient progress has been made
in block \(m-1\). This coupling converts the parallel blockwise chains into
a constant-width diagonal chain and supplies the additional factor \(M\)
in the oracle lower bound.

\subsection{Coupling the tilted blocks}
\label{subsec:block-coupling}
To impose a sequential dependence across blocks, we
use a smooth function to transmit progress from block \(m\) to block
\(m+1\).  Define 
\[
 q(t)
 :=
 \begin{cases}
  0, & t\leq 0,\\
  3t^2-2t^3, & 0<t<1,\\
  1, & t\geq 1.
 \end{cases}
\]
Define the smooth progress function \(\psi:\R^N\to[0,1]\) by
\[
 \psi(\bm{u})
 :=
 \frac{1}{N}\sum_{j=1}^N
 q\left(\frac{N}{4}u_j\right).
\]
The value \(\psi(\z^m)\) records the progress made within block \(m\).
Since \(q(0)=q'(0)=0\), an unrevealed coordinate \(z_j^m=0\)
contributes zero both to \(\psi(\z^m)\) and to the \(j\)-th component of
\(\nabla\psi(\z^m)\).

We use the progress of block \(m-1\) to control the activation of block
\(m\). With the convention \(\psi(\z^0):=1\), define
\[
 \sigma_m(\z)
 :=
 \phi(\z^m)+\psi(\z^{m-1})-\frac54,
 \qquad \forall m\in[M],
\]
and let
$
 \boldsymbol{\sigma}(\z)
 :=
 \bigl(\sigma_1(\z),\ldots,\sigma_M(\z)\bigr).
$
The term \(\psi(\z^0)=1\) initializes the first block, whereas
\(\psi(\z^{m-1})\) transmits progress from block \(m-1\) to block \(m\).

Finally, we aggregate the positive activation scores and  construct our hard instance as 
\begin{equation}
\label{eq:hard}
 \bar f(\z)
 :=
 \frac{2}{N}
 \left(
   \norm{[\boldsymbol{\sigma}(\z)]_+}
   -
   \sum_{m=1}^M \psi(\z^m)
 \right).
\end{equation}

\begin{remark}[Overlapping relay across blocks]
\label{rem:block-relay}
The score \(\sigma_m\) implements a relay between consecutive blocks. To see
this, suppose that \(m\geq2\) and that block \(m\) is unstarted, so that
\(\z^m=0\) and \(\phi(\z^m)=c_1\). Then
\begin{align*}
 \sigma_m(\z)
 &=
 c_1+\psi(\z^{m-1})-\frac54 =
 \psi(\z^{m-1})
 -
 \left(
   \frac3{16}+\frac1{16N}
 \right).
\end{align*}
Consequently,
\[
 \psi(\z^{m-1})\leq\frac3{16}
 \quad\Longrightarrow\quad
 \sigma_m(\z)\leq-\frac1{16N}<0,
\]
whereas
\[
 \psi(\z^{m-1})
 >
 \frac3{16}+\frac1{16N}
 \quad\Longrightarrow\quad
 \sigma_m(\z)>0.
\]
In the first case, the strict inequality \(\sigma_m(\z)<0\) removes the fan
\(\phi(\z^m)\) from the positive-part term.  Moreover,
\(\nabla\psi(\z^m)=0\) because \(\z^m=0\) and \(q'(0)=0\). Thus block
\(m\) contributes no new direction to the full subdifferential and remains
invisible to the oracle.

Importantly, activating block \(m\) does not require block \(m-1\) to be
completed.  The condition
$ \psi(\z^{m-1})
 >
 \tfrac3{16}+\tfrac1{16N}
$
may hold while \(\phi(\z^{m-1})>0\).  Hence consecutive blocks may be
active simultaneously; the construction does not follow the usual
strictly sequential rule in which one block must be completed before the
next block becomes visible.

The activation is nevertheless delayed by \(\Theta(N)\) coordinate revelations.  Indeed, let
$
 L:=\left\lfloor\frac{3N}{16}\right\rfloor.$
Since \(0\leq q\leq1\) and \(q(0)=0\),
\[
 \left|
   \operatorname{supp}(\z^{m-1})
 \right|
 \leq L
 \quad\Longrightarrow\quad
 \psi(\z^{m-1})
 \leq
 \frac{L}{N}
 \leq
 \frac3{16}.
\]
Thus block \(m\) cannot become active until more than \(L\) coordinates
of block \(m-1\) have been revealed.  Repeating this relay across the
\(M\) blocks staggers their activation by \(\Theta(N)\) coordinates while
allowing neighboring blocks to overlap.  As formalized by the diagonal
filtration below, each layer contains only a constant number of new
coordinates, and reaching the terminal block requires
\(\Theta(MN)\) layers.  This gives a constant-width diagonal zero-chain
rather than the classical one-coordinate-at-a-time chain.
\end{remark}

For the analysis, we next rewrite the outer Euclidean aggregation in an
equivalent variational form.  This representation will be used both to
verify weak convexity and to characterize the full subdifferential. 
Let $ \mc W
 :=
 \left\{
   \bm{w}\in\R_+^M:\norm{\bm{w}}\leq1
 \right\}$ and $ {w}_{M+1}:=0$. 
\begin{lemma}[Variational representation]
\label{lem:max-representation}
For every \(\z\in\R^{MN}\), we have
\[
 \bar f(\z)
 =
 \max_{\bm{w}\in\mc W}F_{\bm{w}}(\z),
\]
where
\begin{align}
 F_{\bm{w}}(\z)
 &:=
 \frac2N
 \left(
   w_1
   -
   \frac54\sum_{m=1}^M w_m
   +
   \sum_{m=1}^M w_m\phi(\z^m)
   +
   \sum_{m=1}^M
   (w_{m+1}-1)\psi(\z^m)
 \right).
 \label{eq:Fw}
\end{align}

\end{lemma}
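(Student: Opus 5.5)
The plan is to use the dual representation of the Euclidean norm of a positive part: for any $\bm{a}\in\R^M$,
\[
 \norm{[\bm{a}]_+}=\max_{\bm{w}\in\mc W}\langle \bm{w},\bm{a}\rangle .
\]
Applied with $\bm{a}=\boldsymbol{\sigma}(\z)$ and combined with an algebraic rearrangement of the linear terms, this should give \eqref{eq:Fw}.

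First I will verify the dual representation. For every $\bm{w}\in\R_+^M$ we have $\langle\bm{w},\bm{a}\rangle\le\langle\bm{w},[\bm{a}]_+\rangle$, since $w_m\ge0$ and $a_m\le[a_m]_+$. Cauchy--Schwarz then gives $\langle\bm{w},[\bm{a}]_+\rangle\le\norm{\bm{w}}\norm{[\bm{a}]_+}\le\norm{[\bm{a}]_+}$ whenever $\bm{w}\in\mc W$. For the reverse inequality there are two cases. If $[\bm{a}]_+\neq\bz$, take $\bm{w}=[\bm{a}]_+/\norm{[\bm{a}]_+}$, which lies in $\mc W$ and satisfies $\langle\bm{w},\bm{a}\rangle=\norm{[\bm{a}]_+}$, because $w_m a_m=[a_m]_+^2/\norm{[\bm{a}]_+}$ coordinatewise. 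If $[\bm{a}]_+=\bz$, take $\bm{w}=\bz$. In both cases the maximum is attained and equals $\norm{[\bm{a}]_+}$.

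Next I will substitute $\sigma_m(\z)=\phi(\z^m)+\psi(\z^{m-1})-\frac54$, with the convention $\psi(\z^0)=1$, into $\sum_m w_m\sigma_m(\z)$. This yields
\[
 \sum_{m=1}^M w_m\sigma_m(\z)
 = w_1+\sum_{m=2}^M w_m\psi(\z^{m-1})+\sum_{m=1}^M w_m\phi(\z^m)-\frac54\sum_{m=1}^M w_m .
\]
Reindexing the middle sum as $\sum_{m=1}^{M-1}w_{m+1}\psi(\z^m)$ and using $w_{M+1}=0$ gives $\sum_{m=1}^M w_{m+1}\psi(\z^m)$. Subtracting $\sum_m\psi(\z^m)$ then produces the coefficient $(w_{m+1}-1)$ on each $\psi(\z^m)$.

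Since $-\frac{2}{N}\sum_m\psi(\z^m)$ does not depend on $\bm{w}$, it can be moved inside the maximum. Multiplying by $\frac2N>0$ preserves the maximum, so $\bar f(\z)=\max_{\bm{w}\in\mc W}F_{\bm{w}}(\z)$. The only step needing care is the dual identity in the degenerate case $[\bm{a}]_+=\bz$, which the explicit choice $\bm{w}=\bz$ handles; the rest is bookkeeping of the index shift.
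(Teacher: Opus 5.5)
Your proposal is correct and follows the paper's route: apply the identity $\max_{\bm{w}\in\mc W}\langle\bm{w},\bm{a}\rangle=\norm{[\bm{a}]_+}$ to $\bm{a}=\boldsymbol{\sigma}(\z)$, shift the index on the $\psi(\z^{m-1})$ terms using $w_{M+1}=0$, subtract the progress term, and rescale by $\tfrac{2}{N}$. The only difference is that you prove the support-function identity explicitly, including attainment and the degenerate case $[\bm{a}]_+=\bz$, whereas the paper takes it as standard.
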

\begin{proof}[Proof of Lemma~\ref{lem:max-representation}]
    For every \(\a\in\R^M\), the support function of the nonnegative Euclidean
unit ball satisfies
$
 \max_{\bm{w}\in\mc W}\langle \bm{w},\a\rangle
 =
 \norm{[\a]_+}.
$ Applying this identity to
$
 \a
 =
 \boldsymbol{\sigma}(\z)
$ gives
\[
 \norm{[\boldsymbol{\sigma}(\z)]_+}
 =
 \max_{\bm{w}\in\mc W}
 \sum_{m=1}^M w_m\sigma_m(\z).
\] 
Substituting the definitions of the activation scores yields
\begin{align*}
 \sum_{m=1}^M w_m\sigma_m(\z)
 &=
 w_1
 -
 \frac54\sum_{m=1}^M w_m
 +
 \sum_{m=1}^M w_m\phi(\z^m)
 +
 \sum_{m=1}^{M-1}w_{m+1}\psi(\z^m).
\end{align*}
Subtracting the progress term
$
 \sum_{m=1}^M \psi(\z^m)
$ 
and multiplying by \(\tfrac{2}{N}\) gives the result.
\end{proof}

\section{Verification of the Unscaled Hard Instance Certificates}
\label{sec:verification}
We now verify that the unscaled hard instance constructed in the
preceding section satisfies Condition \ref{cond:normalized-hard-instance} (C1)--(C4).
We follow the order of the four certificates.  First, the variational max
representation yields the required Lipschitz continuity and weak
convexity.  Second, we introduce the diagonal coordinate filtration and
establish the full subdifferential zero-chain property.  Third, we show
that every point before the terminal layer has a nonvanishing Moreau
gradient.  Finally, we bound the initial gap.

\subsection{Lipschitz continuity and weak convexity}
\label{subsec:regularity}
In the max representation, the tilted block \(\phi\) is convex, and all
negative curvature comes from the smooth progress function \(\psi\). We
first record the estimates on \(\psi\) needed in this subsection.
\begin{lemma}
\label{lem:progress-regularity}
For every \(\bm{u}\in\R^N\), we have 
\[
 0\leq\psi(\bm{u})\leq1,
 \qquad
 \norm{\nabla\psi(\bm{u})}
 \leq
 \frac{3\sqrt N}{8},
 \quad \text{and} \quad
 \Lip(\nabla\psi)
 \leq
 \frac{3N}{8}.
\]
\end{lemma}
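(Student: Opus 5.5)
The plan is to reduce everything to one-dimensional estimates on the cubic smoothstep $q$, because $\psi$ is a separable average of rescaled copies of $q$. Concretely, with $s:=\tfrac N4$ we have $\psi(\bm u)=\tfrac1N\sum_j q(s u_j)$. I will first record three properties of $q$:
\begin{itemize}
\item $q$ is $C^1$, since $q(0)=q'(0)=0$ and $q(1)=1$, $q'(1)=0$ match the constant pieces.
\item $0\le q\le 1$ on $\R$.
\item $|q'|\le \tfrac32$ and $q'$ is $6$-Lipschitz.
\end{itemize}
On $(0,1)$ we have $q'(t)=6t(1-t)\ge0$, which is maximized at $t=\tfrac12$ with value $\tfrac32$. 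Hence $q$ is nondecreasing from $0$ to $1$, which gives the second item and the first half of the third. Next, $q''(t)=6-12t$ on $(0,1)$, so $|q''|\le 6$ there, and $q''=0$ outside $[0,1]$. Since $q'$ is continuous and piecewise $C^1$ with derivative bounded by $6$, it is $6$-Lipschitz on all of $\R$. The only point needing care is checking continuity of $q'$ at $0$ and $1$ so that the piecewise bound integrates to a global Lipschitz bound.

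With these in hand, the bound $0\le\psi\le1$ is immediate, because $\psi$ is an average of $N$ numbers in $[0,1]$. For the gradient, $\partial_j\psi(\bm u)=\tfrac{s}{N}q'(su_j)=\tfrac14 q'(su_j)$, so $|\partial_j\psi|\le \tfrac38$ for each $j$. Therefore $\norm{\nabla\psi(\bm u)}\le \sqrt N\cdot\tfrac38$, as claimed.

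For the Lipschitz constant of the gradient, $\nabla\psi$ acts coordinatewise. Its $j$-th component is $u_j\mapsto \tfrac14 q'(su_j)$, which is Lipschitz with constant $\tfrac14\cdot 6\cdot s=\tfrac{6}{4}\cdot\tfrac N4=\tfrac{3N}{8}$. Summing the squares of the componentwise differences then gives
\[
\norm{\nabla\psi(\bm u)-\nabla\psi(\bm v)}\le \tfrac{3N}{8}\norm{\bm u-\bm v}.
\]
Equivalently, the Hessian is diagonal with entries bounded by $\tfrac{3N}{8}$ almost everywhere. None of these steps is a real obstacle; the one place to be careful is the global $C^{1,1}$ regularity of $q$ across the breakpoints, which I handle by verifying continuity of $q'$ at $0$ and $1$ explicitly.
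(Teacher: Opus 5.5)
Your proposal is correct and follows essentially the same route as the paper: establish $q\in C^1$ with $0\le q\le 1$, $0\le q'\le \tfrac32$, and $\Lip(q')\le 6$, then use the chain rule $\nabla\psi(\bm u)=\tfrac14\bigl(q'(\tfrac N4 u_j)\bigr)_{j}$ and sum the coordinatewise bounds. Your explicit argument that the piecewise bound $|q''|\le 6$ gives a global $6$-Lipschitz bound on $q'$ (using continuity of $q'$ at $0$ and $1$) spells out what the paper states in one line.
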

\begin{proof}[Proof of Lemma~\ref{lem:progress-regularity}]
On \(0<t<1\), we have $ q'(t)=6t(1-t)$ and
$q''(t)=6-12t$. 
Thus the one-sided derivatives agree at both junction points, and
\(q\in C^1(\R)\).  Hence we have $ 0\leq q\leq1$, $0\leq q'\leq\frac32$ and $ \Lip(q')\leq6$. By the chain rule,
\[
 \nabla\psi(\bm{u})
 =
 \frac14
 \left(
   q'\left(\frac N4u_1\right),
   \ldots,
   q'\left(\frac N4u_N\right)
 \right).
\]
Since \(0\leq q'\leq \tfrac{3}{2}\), it follows that
\begin{align*}
 \norm{\nabla\psi(\bm{u})}
 &=
 \frac14
 \left(
   \sum_{j=1}^N
   \left|
     q'\left(\frac N4u_j\right)
   \right|^2
 \right)^{1/2}\leq
 \frac14
 \left(
   N\left(\frac32\right)^2
 \right)^{1/2}
 =
 \frac{3\sqrt N}{8}.
\end{align*}
Moreover, for every \(\bm{u},\bm{v}\in\R^N\),
\begin{align*}
 \norm{\nabla\psi(\bm{u})-\nabla\psi(\bm{v})}^2
 &\leq
 \sum_{j=1}^N
 \left(
   \frac{3N}{8}|u_j-v_j|
 \right)^2
 =
 \frac{9N^2}{64}\norm{\bm{u}-\bm{v}}^2.
\end{align*}
This proves the claimed estimates.
\end{proof}
\begin{proposition}
\label{prop:regularity}
The function \(\bar f\) is finite-valued, bounded below,
\(1\)-Lipschitz, and \(1\)-weakly convex.  Hence, 
Condition~\ref{cond:normalized-hard-instance} (C1) holds.
\end{proposition}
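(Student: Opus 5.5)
The plan is to work entirely through the variational representation of Lemma~\ref{lem:max-representation}, $\bar f=\max_{\bm w\in\mc W}F_{\bm w}$, and to prove each property uniformly over $\bm w\in\mc W$. Two facts about a pointwise maximum then transfer everything to $\bar f$. First, a pointwise maximum of $1$-Lipschitz functions is $1$-Lipschitz. Second, if each $F_{\bm w}+\tfrac12\norm{\cdot}^2$ is convex, then so is $\bar f+\tfrac12\norm{\cdot}^2=\max_{\bm w}\bigl(F_{\bm w}+\tfrac12\norm{\cdot}^2\bigr)$.

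\textbf{Finiteness and lower bound.} Finiteness is immediate, since $\phi$ and $\psi$ are finite everywhere. For the lower bound, I use $\norm{[\boldsymbol\sigma(\z)]_+}\ge0$ and $\psi\le1$ from Lemma~\ref{lem:progress-regularity}, which give $\bar f(\z)\ge-\tfrac{2M}{N}$ for every $\z$.

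\textbf{Lipschitz continuity.} Fix $\bm w\in\mc W$. The function $F_{\bm w}$ is block-separable: its block-$m$ part is
\[
\tfrac2N\bigl(w_m\phi(\z^m)+(w_{m+1}-1)\psi(\z^m)\bigr).
\]
Each affine piece of $\phi$ has gradient of norm at most $N/8$, so $\phi$ is $(N/8)$-Lipschitz. By Lemma~\ref{lem:progress-regularity}, $\psi$ is $\tfrac{3\sqrt N}{8}$-Lipschitz. Since $0\le w_{m+1}\le1$, we have $|w_{m+1}-1|\le1$. Hence every (Clarke) subgradient of $F_{\bm w}$ has block-$m$ component of norm at most
\[
\tfrac2N\Bigl(w_m\tfrac N8+\tfrac{3\sqrt N}{8}\Bigr).
\]
I then sum the squares over $m$ using $(a+b)^2\le2a^2+2b^2$, together with $\sum_m w_m^2\le1$ and $M\le N/12$:
\[
\tfrac4{N^2}\cdot 2\Bigl(\tfrac{N^2}{64}+M\tfrac{9N}{64}\Bigr)\le 8\Bigl(\tfrac1{64}+\tfrac{9}{768}\Bigr)=\tfrac{7}{32}<1 .
\]
So $F_{\bm w}$ is $\sqrt{7/32}$-Lipschitz, hence $1$-Lipschitz, uniformly in $\bm w$. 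The Lipschitz property of $\bar f$ follows from the first fact above.

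\textbf{Weak convexity.} Again fix $\bm w\in\mc W$ and examine each term of $F_{\bm w}$.
\begin{itemize}
\item The constant terms are harmless.
\item Each term $\tfrac2Nw_m\phi(\z^m)$ is convex, since $w_m\ge0$ and $\phi$ is a maximum of affine functions.
\item Each term $\tfrac2N(w_{m+1}-1)\psi(\z^m)$ is $C^1$, with gradient Lipschitz constant at most $\tfrac2N\cdot\tfrac{3N}{8}=\tfrac34$ by Lemma~\ref{lem:progress-regularity}, since $|w_{m+1}-1|\le1$. A function $g$ with $L$-Lipschitz gradient is $L$-weakly convex, so $g+\tfrac12\norm{\cdot}^2$ is convex whenever $L\le1$.
\end{itemize}
Because these terms act on disjoint blocks, $F_{\bm w}+\tfrac12\norm{\z}^2$ is a sum of convex functions and is therefore convex. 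Taking the maximum over $\bm w$ gives the $1$-weak convexity of $\bar f$.

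\textbf{Main obstacle.} The only delicate point is the Lipschitz constant. The $\psi$-terms each contribute order $\sqrt N/N$ per block, but there are $M$ of them, so their total contribution is of order $\sqrt{M/N}$. This is exactly where the admissibility constraint $M\le N/12$ is needed. If the crude splitting $(a+b)^2\le2a^2+2b^2$ had been too lossy, I would instead have expanded the cross term using Cauchy–Schwarz; here the crude bound already suffices.
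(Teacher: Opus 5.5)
Your proposal is correct and follows essentially the same route as the paper: you establish the Lipschitz and weak-convexity bounds for each $F_{\bm w}$ uniformly over $\bm w\in\mc W$ (convex $\phi$-terms plus $\tfrac34$-smooth $\psi$-terms on disjoint blocks, with $M\le N/12$ controlling the $\psi$-contribution) and then pass to the pointwise maximum. The differences are cosmetic. The paper adds the Lipschitz constants of the $\phi$-sum and the $\psi$-sum to get $\tfrac14+\tfrac{\sqrt3}{8}$, whereas you use $(a+b)^2\le 2a^2+2b^2$ blockwise to get $\sqrt{7/32}$. The paper also records the sharper weak-convexity modulus $\tfrac34$.
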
 
\begin{proof}[Proof of Proposition~\ref{prop:regularity}]
Fix \(\bm{w}\in\mc W\), and define the net progress coefficients
\[
 a_m(\bm{w})
 :=
 1-w_{m+1},
 \qquad \forall m\in[M],
\]
where \(w_{M+1}=0\). Since \(\bm{w}\geq0\) and
\(\norm{\bm{w}}\leq1\), we have
$
 0\leq a_m(\bm{w})\leq1$ for all
$m\in[M].
$
The function \(F_{\bm{w}}\) can therefore be written as
\begin{equation}
\label{eq:z-dependent}
 F_{\bm{w}}(\z)
 =
 C(\bm{w})
 +
 \frac2N\sum_{m=1}^M w_m\phi(\z^m)
 -
 \frac2N\sum_{m=1}^M a_m(\bm{w})\psi(\z^m),
\end{equation}
where
$
 C(\bm{w})
 :=
\frac2N
\left(
 w_1-\frac54\sum_{m=1}^M w_m
\right)
$
is independent of \(\z\).

We first verify weak convexity. Let $L_\psi:=\frac{3N}{8}$ and $ \rho_0:=\frac2N L_\psi=\frac34$. 
By Lemma~\ref{lem:progress-regularity}, the function
\(-\frac2N a_m(\bm{w})\psi\) is
\(\frac2N a_m(\bm{w})L_\psi\)-weakly convex and hence
\(\rho_0\)-weakly convex. 
Since \(\phi\) is convex and the block
variables are disjoint, every \(F_{\bm{w}}\) is \(\rho_0\)-weakly convex.
The pointwise maximum in Lemma~\ref{lem:max-representation} preserves this
common weak-convexity modulus, so \(\bar f\) is \(3/4\)-weakly convex.

We next verify Lipschitz continuity. Every affine branch of \(\phi\) has
slope norm at most \(\tfrac{N}{8}\), and hence \(\phi\) is \(\tfrac{N}{8}\)-Lipschitz.
Using the orthogonality of the blocks,
\[
 \Lip\left(
   \frac2N\sum_{m=1}^M w_m\phi(\z^m)
 \right)
 \leq
 \frac14\norm{\bm{w}}
 \leq
 \frac14.
\]
For the progress part, Lemma~\ref{lem:progress-regularity} gives
\begin{align*}
 \left\|
   \nabla\left(
     \frac2N\sum_{m=1}^M a_m(\bm{w})\psi(\z^m)
   \right)
 \right\|^2
 &=
 \frac4{N^2}\sum_{m=1}^M
 a_m(\bm{w})^2\norm{\nabla\psi(\z^m)}^2\leq
 \frac{9M}{16N}.
\end{align*}
Consequently,
\[
 \Lip(F_{\bm{w}} )
 \leq
 \frac14
 +
 \frac34\sqrt{\frac MN}
 \leq
 \frac14+\frac{\sqrt3}{8}
 <1,
\]
where we used \(M\leq \tfrac{N}{12}\).
This bound is uniform over \(\bm{w}\in\mc W\), so the pointwise maximum
\(\bar f\) is also \(1\)-Lipschitz.

Finally, \(\bar f\) is finite-valued by construction.  Since its
first term is nonnegative and \(0\leq\psi\leq1\), we have 
\begin{equation}
\label{eq:lower_f}
 \bar f(\z)
 \geq
 -\frac2N\sum_{m=1}^M \psi(\z^m)
 \geq
 -\frac{2M}{N}.
\end{equation}
Thus \(\bar f\) is bounded below, completing the proof.
\end{proof}

\subsection{A generalized zero-chain property} 
\label{sec:locality} 
Let
$
 L:=\left\lfloor\frac{3N}{16}\right\rfloor.
$ 
We assign to each coordinate direction \(\e_{m,j}\) the level
\[
 \operatorname{lev}(m,j):=(m-1)L+j-1.
\]
For every \(r\geq0\), define the \(r\)-th diagonal layer by
\[
 \mathcal E_r
 :=
 \operatorname{span}
 \left\{
   \e_{m,j}:
   \operatorname{lev}(m,j)=r
 \right\}.
\]
We then define the diagonal filtration by 
\[
 \mathcal V_{-1}:=\{0\},
 \quad \text{and} \quad
 \mathcal V_t
 :=
 \bigoplus_{r=0}^t\mathcal E_r
 =
 \operatorname{span}
 \left\{
   \e_{m,j}:
   \operatorname{lev}(m,j)\leq t
 \right\},
 \qquad  \forall t\geq0.
\]
Thus \(\mathcal V_t\) contains all coordinate directions up to level \(t\)
in the staggered ordering.  The following lemma makes precise that this
filtration has constant-width increments. 

\begin{lemma}[Width of the diagonal filtration]
\label{lem:diagonal-width}
For every \(t\geq-1\),
\[
 \mathcal V_{t+1}
 =
 \mathcal V_t\oplus\mathcal E_{t+1},
 \quad \text{and} \quad
 \dim\left(
   \mathcal V_{t+1}/\mathcal V_t
 \right)
 =
 \dim(\mathcal E_{t+1})
 \leq6.
\]
\end{lemma}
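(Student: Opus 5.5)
The direct-sum identity follows from the definitions alone; the substance is counting the coordinates in a single layer. The layers $\mathcal E_r$ are spanned by pairwise disjoint sets of standard basis vectors, because every index $(m,j)\in[M]\times[N]$ has exactly one level $\operatorname{lev}(m,j)$. Hence $\mathcal V_{t+1}=\operatorname{span}\{\e_{m,j}:\operatorname{lev}(m,j)\le t+1\}$ splits as $\mathcal V_t\oplus\mathcal E_{t+1}$: the index set for level at most $t+1$ is the disjoint union of the indices with level at most $t$ and those with level exactly $t+1$. This also covers $t=-1$, since $\mathcal V_{-1}=\{0\}$ and $\mathcal V_0=\mathcal E_0$. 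It follows that $\dim(\mathcal V_{t+1}/\mathcal V_t)=\dim\mathcal E_{t+1}$, and this equals the number of pairs $(m,j)$ with $(m-1)L+j-1=t+1$.

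I would count these pairs as follows. Set $r=t+1\ge 0$. For each block $m$ there is at most one $j$ with $\operatorname{lev}(m,j)=r$, namely $j=r-(m-1)L+1$, and it is admissible only if $1\le j\le N$. This requires $(m-1)L\le r\le (m-1)L+N-1$, so $m-1$ must lie in the interval $[(r-N+1)/L,\ r/L]$. That interval has length $(N-1)/L$, so it contains at most $\lfloor (N-1)/L\rfloor+1$ integers. Consequently $\dim\mathcal E_r\le \frac{N-1}{L}+1$.

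The remaining step, which is the only place where admissibility ($N\ge60$) is used, is to bound $(N-1)/L$. Since $L=\lfloor 3N/16\rfloor\ge \frac{3N}{16}-1$, we get
\[
\frac{N-1}{L}\le \frac{N-1}{3N/16-1}=\frac{16(N-1)}{3N-16}.
\]
For $N\ge 60$ the right-hand side is at most $16\cdot 59/164<5.76$. Therefore $\lfloor (N-1)/L\rfloor\le 5$, so at most $6$ blocks contribute, and $\dim\mathcal E_{t+1}\le 6$.

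The only care needed is the floor in $L$ and the boundary indices. Because the integer count uses $\lfloor\cdot\rfloor+1$ rather than a strict length estimate, the value $6$ comes out with margin; it is tight only in the sense that the bound $16/3\approx5.33$ lies between $5$ and $6$.
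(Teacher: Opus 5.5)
Your proof is correct and follows the paper's argument. The direct sum comes from the level sets being disjoint, and the layer count reduces to counting integers $m-1$ with $r-N+1\le (m-1)L\le r$, which gives at most $1+\lfloor (N-1)/L\rfloor$ contributing blocks; the only cosmetic difference is that you bound $(N-1)/L\le 16(N-1)/(3N-16)<5.76$ directly, whereas the paper uses $L\ge N/6$ to conclude $\lceil N/L\rceil\le 6$.
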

\begin{proof}[Proof of Lemma~\ref{lem:diagonal-width}]
The direct-sum identity follows from the definition of the filtration.
To bound the dimension of a layer, fix \(r\geq0\).  The equality
\(\operatorname{lev}(m,j)=r\) implies
$
 j=r-(m-1)L+1\in[N],
$
and hence
\[
 r-N+1
 \leq
 (m-1)L
 \leq
 r.
\]
Thus the admissible values of \((m-1)L\) lie in an interval of length
\(N-1\) and are separated by \(L\).  Therefore,
\[
 \dim(\mathcal E_r)
 \leq
 1+\left\lfloor\frac{N-1}{L}\right\rfloor
 =
 \left\lceil\frac NL\right\rceil.
\]
Since \(N\geq60\),
we have $
 L
 \geq
 \tfrac{3N}{16}-1
 \geq
 \tfrac N6.
$
Therefore \(\lceil N/L\rceil\leq6\).
\end{proof}
The preceding lemma controls the total number of coordinate directions in
each diagonal layer. 
We next compute the full subdifferential from the variational representation.

\begin{lemma}[Characterization of the subdifferential]
\label{lem:full-subdiff}
Let
$
 R(\z):=\norm{[\boldsymbol{\sigma}(\z)]_+}$ and $
 \mathcal I(\z)
 :=
 \operatorname*{argmax}_{\bm{w}\in\mathcal W}
 \langle \bm{w},\boldsymbol{\sigma}(\z)\rangle.
$
Then
\begin{equation}
 \partial\bar f(\z)
 =
 \operatorname{conv}
 \left(
   \bigcup_{\bm{w}\in\mathcal I(\z)}
   \mathcal G(\bm{w},\z)
 \right),
 \label{eq:full-subdiff}
\end{equation}
where \(\mathcal G(\bm{w},\z)\) is the nonempty compact convex set of vectors
\(\g=(\g^1,\ldots,\g^M)\) satisfying for all $ m\in[M]$,
\begin{equation*}
 \g^m
 =
 \frac2N
 \left(
   w_m\bm{u}_m
   -
   a_m(\bm{w})\nabla\psi(\z^m)
 \right),
 \qquad
 \bm{u}_m\in\partial\phi(\z^m),
 \label{eq:block-subgradient}
\end{equation*}
Moreover, for every \(\bm{w}\in\mathcal I(\z)\) and every \(m\in[M]\), if $w_m>0$, we have
$\sigma_m(\z)\geq0$.
\end{lemma}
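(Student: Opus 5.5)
The plan is to compute $\partial\bar f$ through the max representation of Lemma~\ref{lem:max-representation}, treated as a Danskin-type formula for a pointwise maximum of weakly convex functions over the compact index set $\mathcal W$. Since $\bar f$ and each $F_{\bm w}$ are weakly convex with the common modulus $\rho_0=3/4$ (proof of Proposition~\ref{prop:regularity}), I would add the quadratic $\tfrac{\rho_0}{2}\norm{\z}^2$ to everything. The shifted functions $\widetilde F_{\bm w}:=F_{\bm w}+\tfrac{\rho_0}{2}\norm{\cdot}^2$ are then convex and finite. The map $(\bm w,\z)\mapsto \widetilde F_{\bm w}(\z)$ is jointly continuous, because $\phi$ and $\psi$ are continuous and the dependence on $\bm w$ is affine, and $\mathcal W$ is compact.

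The classical formula for the subdifferential of a supremum of convex functions over a compact index set with continuous dependence (Ioffe--Tikhomirov / Danskin) then gives
\[
\partial\Bigl(\bar f+\tfrac{\rho_0}{2}\norm{\cdot}^2\Bigr)(\z)=\operatorname{conv}\Bigl(\bigcup_{\bm w\in\mathcal I'(\z)}\partial\widetilde F_{\bm w}(\z)\Bigr),
\]
where $\mathcal I'(\z)$ is the set of maximizing $\bm w$. Subtracting $\rho_0\z$ from both sides is legitimate, since for weakly convex functions the subdifferential equals the convex subdifferential of the shift minus the gradient of the quadratic. This yields the formula with $\partial F_{\bm w}(\z)$ in place of $\mathcal G(\bm w,\z)$. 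Since the union over $\bm w$ is compact and lives in finite dimension, its convex hull is compact and needs no closure.

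Two identifications remain.

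\textbf{The maximizer set.} I will show $\mathcal I'(\z)=\mathcal I(\z)$. By the computation in the proof of Lemma~\ref{lem:max-representation}, $F_{\bm w}(\z)=\tfrac2N\bigl(\langle\bm w,\boldsymbol\sigma(\z)\rangle-\sum_m\psi(\z^m)\bigr)$. The subtracted sum does not depend on $\bm w$, so maximizing $F_{\bm w}(\z)$ over $\mathcal W$ is the same as maximizing $\langle\bm w,\boldsymbol\sigma(\z)\rangle$.

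\textbf{The subdifferential of each $F_{\bm w}$.} Use the decomposition~\eqref{eq:z-dependent}. The function $F_{\bm w}$ is a constant, plus a block-separable sum of the convex functions $\tfrac2N w_m\phi(\z^m)$, minus the $C^1$ sum $\tfrac2N\sum_m a_m(\bm w)\psi(\z^m)$. For a finite sum of convex functions of disjoint block variables, the subdifferential is the Cartesian product of the blockwise subdifferentials, and $\partial(w_m\phi)=w_m\partial\phi$ for $w_m\ge0$; when $w_m=0$ this is $\{\bz\}$, which agrees with $w_m\partial\phi$ because $\partial\phi$ is nonempty and bounded. Adding a $C^1$ function shifts the subdifferential by its gradient (the sum rule for regular functions). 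This gives exactly $\mathcal G(\bm w,\z)$. It is nonempty, compact and convex because each $\partial\phi(\z^m)$ is the convex hull of the active slopes of the finite max.

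\textbf{The sign claim.} Take $\bm w\in\mathcal I(\z)$ with $w_m>0$, and suppose $\sigma_m(\z)<0$. Setting $w_m$ to zero keeps the vector in $\mathcal W$, since the norm decreases and nonnegativity is preserved, and it strictly increases $\langle\bm w,\boldsymbol\sigma\rangle$ by $-w_m\sigma_m>0$. This contradicts maximality, so $\sigma_m(\z)\ge0$.

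\textbf{Main obstacle.} The delicate step is invoking the correct Danskin/Valadier-type theorem with the right hypotheses: a compact index set, upper semicontinuity of $\bm w\mapsto F_{\bm w}(\z)$, and convexity in $\z$ after the shift. It also has to be checked that the convexified union is already closed, so that no closure is needed in~\eqref{eq:full-subdiff}. For closedness, I would argue that $\mathcal I(\z)$ is compact and that the graph of $\bm w\mapsto\mathcal G(\bm w,\z)$ is closed and bounded. The union is then compact, and Carathéodory's theorem shows its convex hull is compact.
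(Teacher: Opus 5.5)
Your proposal is correct and has the same structure as the paper's proof. The sum rule applied to \eqref{eq:z-dependent} gives $\partial F_{\bm w}(\z)=\mathcal G(\bm w,\z)$. A max rule over the compact index set $\mathcal W$ then yields \eqref{eq:full-subdiff}, with active set $\mathcal I(\z)$ because $F_{\bm w}(\z)=\tfrac2N\bigl(\langle\bm w,\boldsymbol\sigma(\z)\rangle-\sum_m\psi(\z^m)\bigr)$. The sign claim is the same coordinate-zeroing argument.

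The only difference is how the max rule is justified. The paper cites Rockafellar--Wets Theorem~10.31, whereas you shift by $\tfrac{\rho_0}{2}\norm{\cdot}^2$ and use the convex Ioffe--Tikhomirov formula. Your version handles the nonsmooth $F_{\bm w}$ directly: a max rule for $C^1$ families would first require folding the max defining $\phi$ into the index set. It also makes the closedness of the convex hull explicit.
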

\begin{proof}[Proof of Lemma~\ref{lem:full-subdiff}]
We next prove \eqref{eq:full-subdiff}. For fixed \(\bm{w}\), expression \eqref{eq:z-dependent} and the
subdifferential summation rule \cite[Corollary 10.9]{rockafellar2009variational}
give
$
 \partial F_{\bm{w}}(\z)=\mathcal G(\bm{w},\z).
$ 
By the variational representation established in Lemma \ref{lem:max-representation},
\[
 \bar f(\z)=\max_{\bm{w}\in\mathcal W}F_{\bm{w}}(\z),
\] 
and its active indices are precisely the elements of \(\mathcal I(\z)\). The max rule \cite[Theorem 10.31]{rockafellar2009variational} therefore
gives \eqref{eq:full-subdiff}.

Finally, fix arbitrary \(\bm w\in\mathcal I(\z)\) and \(m\in[M]\), and
suppose that \(w_m>0\) and \(\sigma_m(\z)<0\). Define
\(\widetilde{\bm{w}}\) by $\widetilde{\bm{w}}:=\bm{w}-w_m \e_m$.
Since \(\bm{w}\geq0\) and
\(\|\widetilde {\bm{w}}\|\leq\|\bm{w}\|\leq1\), we have
\(\widetilde {\bm{w}}\in\mathcal W\).  Moreover,
\[
\langle\widetilde{\bm{w}},\boldsymbol{\sigma}(\z)\rangle
 -
 \langle \bm{w},\boldsymbol{\sigma}(\z)\rangle
 =
 -w_m \sigma_m(\z)
 >0.
\]
This contradicts the assumption that
\(\bm{w}\in\mathcal I(\z)\) is active. Therefore,
\(w_m>0\) implies \(\sigma_m(\z)\geq0\).
\end{proof}

To analyze the support of the oracle response, we also
need a blockwise description of the coordinates contained in
\(\mathcal V_t\).  For \(m\in[M]\), define
\[
 k_m(t)
 :=
 \min\left\{
   N,\,
   \max\left\{
     0,\,
     t-(m-1)L+1
   \right\}
 \right\}.
\]
Then the coordinate directions of block \(m\) contained in
\(\mathcal V_t\) are precisely
$
 \e_{m,1},\ldots,\e_{m,k_m(t)}.
$ 
Thus \(k_m(t)=0\) means that block \(m\) has not yet entered the filtration,
\(1\leq k_m(t)<N\) means that it is partially revealed, and \(k_m(t)=N\)
means that the entire block is contained in \(\mathcal V_t\).  We use these three cases to prove Condition \ref{cond:normalized-hard-instance} (C2). 
\begin{proposition}
\label{prop:diagonal-locality}
For every \(t\geq-1\) and every \(\z\in\mathcal V_t\), we have $ \partial\bar f(\z)
 \subseteq
 \mathcal V_{t+1}.$

\end{proposition}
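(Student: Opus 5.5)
The plan is to reduce the claim to a blockwise support check using the subdifferential formula \eqref{eq:full-subdiff} of Lemma~\ref{lem:full-subdiff}. Since \(\mathcal V_{t+1}\) is a linear subspace, it is convex, so it contains the convex hull of any of its subsets. It therefore suffices to show that \(\mathcal G(\bm w,\z)\subseteq\mathcal V_{t+1}\) for every active \(\bm w\in\mathcal I(\z)\). Since \(\mathcal V_{t+1}\) is a coordinate subspace that splits across blocks, this reduces to one check per block. For each \(m\in[M]\), every \(\g^m=\frac2N\bigl(w_m\bm u_m-a_m(\bm w)\nabla\psi(\z^m)\bigr)\) must lie in \(\operatorname{span}\{\e_1,\ldots,\e_{k_m(t+1)}\}\), using the blockwise description via \(k_m(\cdot)\). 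Throughout, \(\z\in\mathcal V_t\) means \(\supp(\z^m)\subseteq\{1,\ldots,k_m(t)\}\).

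\emph{Step 1: the progress term.} By the chain rule formula in the proof of Lemma~\ref{lem:progress-regularity}, the \(j\)-th component of \(\nabla\psi(\z^m)\) is \(\tfrac14 q'(\tfrac N4 z^m_j)\). This vanishes whenever \(z^m_j=0\), because \(q'(0)=0\). Hence \(\supp\nabla\psi(\z^m)\subseteq\supp(\z^m)\subseteq[k_m(t)]\subseteq[k_m(t+1)]\), and this term is harmless in every case.

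\emph{Step 2: the tilted-block term \(w_m\bm u_m\).} If \(w_m=0\), there is nothing to prove, so assume \(w_m>0\); by the last part of Lemma~\ref{lem:full-subdiff}, \(\sigma_m(\z)\ge0\). I split into the three cases of \(k_m(t)\).
\begin{enumerate}[(a)]
\item If \(k_m(t)=N\), the whole block already lies in \(\mathcal V_t\subseteq\mathcal V_{t+1}\).
\item If \(1\le k_m(t)<N\), then \(t-(m-1)L+1\in[1,N-1]\), so \(k_m(t+1)=k_m(t)+1\). Lemma~\ref{lem:tilted-block-locality} with \(k=k_m(t)\) gives \(\partial\phi(\z^m)\subseteq\operatorname{span}\{\e_1,\ldots,\e_{k_m(t)+1}\}\), as required.
\item If \(k_m(t)=0\), then \(\z^m=\bz\). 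If moreover \(t+1\ge(m-1)L\), then \(k_m(t+1)\ge1\), and Lemma~\ref{lem:tilted-block-locality} with \(k=0\) gives \(\bm u_m\in\operatorname{span}\{\e_1\}\), which suffices. This sub-case covers \(m=1\) for every \(t\ge-1\), including \(t=-1\) where \(\z=\bz\), since then \(t+1\ge0=(m-1)L\).
\end{enumerate}

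\emph{Step 3: the unstarted-block case, which I expect to be the main obstacle.} It remains to treat \(m\ge2\), \(k_m(t)=0\) and \(t\le(m-1)L-2\). Here the relay of Remark~\ref{rem:block-relay} must show that such a block is inactive. The bound on \(t\) gives \(k_{m-1}(t)\le t-(m-2)L+1\le L-1\), so \(|\supp(\z^{m-1})|\le L\). As computed in Remark~\ref{rem:block-relay}, this yields \(\psi(\z^{m-1})\le L/N\le\tfrac3{16}\). Since \(\z^m=\bz\) gives \(\phi(\z^m)=c_1\), it follows that \(\sigma_m(\z)\le-\tfrac1{16N}<0\). By Lemma~\ref{lem:full-subdiff}, every active \(\bm w\) then has \(w_m=0\), so the term \(w_m\bm u_m\) vanishes, contradicting the assumption \(w_m>0\). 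Hence this block contributes only the progress term, which is zero by Step~1.

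The delicate point is exactly the bookkeeping at the level boundary \(t\in\{(m-1)L-2,(m-1)L-1\}\): the strict negativity of \(\sigma_m\), coming from the tilt \(c_1-1=\frac{N-1}{16N}\), must kick in precisely when block \(m-1\) has at most \(L\) revealed coordinates. Otherwise, one step later, the next layer must already contain \(\e_{m,1}\). Combining Steps~1--3 over all blocks and all \(\bm w\in\mathcal I(\z)\) yields \(\partial\bar f(\z)\subseteq\mathcal V_{t+1}\).
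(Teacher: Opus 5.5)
Your proof is correct and follows essentially the same route as the paper. You reduce via the convex-hull formula of Lemma~\ref{lem:full-subdiff} to generating vectors, remove the $\nabla\psi$ term using $q'(0)=0$, split blocks into fully revealed, partially revealed, and unreached, and use the relay bound $\psi(\z^{m-1})\le L/N\le\tfrac3{16}$ to force $w_m=0$ on unreached blocks $m\ge2$. The only difference is cosmetic: you handle the boundary level $t=(m-1)L-1$ separately by noting that $\e_{m,1}$ already lies in $\mathcal V_{t+1}$, whereas the paper applies the relay uniformly to all $t\le(m-1)L-1$ via $k_{m-1}(t)\le L$, so your split is valid but unnecessary.
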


\begin{proof}[Proof of Proposition~\ref{prop:diagonal-locality}]
Fix \(t\geq-1\) and \(\z\in\mathcal V_t\), and write
\(k_m:=k_m(t)\).  By Lemma \ref{lem:full-subdiff}, it suffices to show that
every generating vector \(\g\in\mathcal G(\bm{w},\z)\), with
\(\bm{w}\in\mathcal I(\z)\), belongs to \(\mathcal V_{t+1}\).
Its \(m\)-th
block has the form
\[
 \g^m
 =
 \frac2N
 \left(
   w_m\bm{u}_m
   -
   a_m(\bm{w})\nabla\psi(\z^m)
 \right),
 \qquad
 \bm{u}_m\in\partial\phi(\z^m).
\]

First, consider the contribution of \(\psi\). Fix any \(m\in[M]\). For every
\(j>k_m\), the coordinate direction
\(\e_{m,j}\) does not belong to \(\mathcal V_t\).  Since
\(\z\in\mathcal V_t\), we have \(z_j^m=0\), and hence
\[
 [\nabla\psi(\z^m)]_j
 =
 \frac14
 q'\left(\frac N4z_j^m\right)
 =
 \frac14q'(0)
 =
 0.
\]
Therefore,
$
 \nabla\psi(\z^m)
 \in
 \operatorname{span}
 \{\e_1,\ldots,\e_{k_m}\}, 
$ 
for every \(m\in[M]\).

Next, fix any \(m\in[M]\). We distinguish three cases
according to whether block \(m\) is fully revealed, partially revealed, or
unreached.

\begin{enumerate}[label=(\roman*)]
\item If \(k_m=N\), the
entire \(m\)-th block is contained in \(\mathcal V_t\), so the subgradient
in block \(m\) belongs to \(\mathcal V_t\).
\item Suppose that \(1\leq k_m<N\).  For every \(j>k_m\), we have \(z_j^m=0\),
and hence the hidden affine pieces of the tilted convex function $\phi$ have values
\[
 c_{k_m+1}
 >
 c_{k_m+2}
 >
 \cdots
 >
 c_N.
\]
Thus only the first hidden facet, indexed by \(k_m+1\), can be active among
the hidden facets.  Every other active facet has index at most \(k_m\).
Therefore,
$ \partial\phi(\z^m)
 \subseteq
 \operatorname{span}
 \{\e_1,\ldots,\e_{k_m+1}\}.
$ 
Since \(k_m\) indexes the last coordinate of block \(m\) contained in
\(\mathcal V_t\),
$
 \operatorname{lev}(m,k_m+1)=t+1.
$
Consequently, the fan contribution of a partially revealed block belongs
to \(\mathcal V_{t+1}\).
\item It remains to consider the case \(k_m=0\). Suppose first that
\(m\geq2\). The condition \(k_m=0\) implies
$
 t\leq(m-1)L-1,
$ 
and therefore
$
 k_{m-1}(t)\leq L.
$ 
Since \(0\leq q\leq1\) and all coordinates of block \(m-1\) beyond
\(k_{m-1}(t)\) vanish, we have
\[
 \psi(\z^{m-1})
 =
 \frac1N\sum_{j=1}^{k_{m-1}(t)}
 q\left(\frac N4z_j^{m-1}\right)
 \leq
 \frac{k_{m-1}(t)}{N}
 \leq
 \frac LN
 \leq
 \frac3{16}.
\]
Moreover, \(k_m=0\) gives \(\z^m=0\), and hence
\(\phi(\z^m)=\phi(0)=c_1\). Therefore,
\[
 \sigma_m(\z)
 =
 c_1+\psi(\z^{m-1})-\frac54
 \leq
 c_1+\frac3{16}-\frac54
 =
 -\frac1{16N}
 <0.
\]
By Lemma \ref{lem:full-subdiff},
we must have \(w_m=0\).  Therefore, an unreached block \(m\geq2\)
contributes no direction.

Finally, suppose that \(m=1\). Since \(k_1=0\), necessarily \(t=-1\).
At \(\z^1=\bz\), the first tilted facet is uniquely
active because $ 
 c_1>c_2>\cdots>c_N>0.
$ Hence
\[
 \partial\phi(\bz)
 =
 \left\{-\frac N8\e_1\right\},
 \quad \text{and} \quad
 \operatorname{lev}(1,1)=0=t+1.
\]
The contribution of the first block therefore belongs to
\(\mathcal V_0=\mathcal V_{t+1}\).
\end{enumerate}

We have shown that every generating vector
\(\g\in\mathcal G(\bm{w},\z)\), with \(\bm{w}\in\mathcal I(\z)\), belongs to
\(\mathcal V_{t+1}\).  Since \(\mathcal V_{t+1}\) is a linear subspace,
taking convex combinations does not enlarge the support.  Lemma \ref{lem:full-subdiff} therefore gives
$
 \partial\bar f(\z)
 \subseteq
 \mathcal V_{t+1}.
$ 

\end{proof}
\subsection{The terminal layer stationarity bound}
\label{subsec:terminal-stationarity}

We now verify Condition \ref{cond:normalized-hard-instance}~\textup{(C3)}. Set
$b:=q\left(\frac34\right)=\frac{27}{32}.
$ 
The first lemma shows that if the terminal block has not reached progress
level \(b\), then at least one phase score remains positive. 
\begin{lemma}
\label{lem:terminal-activation}
If
$
 \psi(\z^M)<b,
$ 
then
\(R(\z)>0\).
\end{lemma}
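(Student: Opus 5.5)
We need: if ψ(z^M) < b then some σ_m(z) > 0. We argue by contradiction, assuming σ_m(z) ≤ 0 for all m ∈ [M].

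The key fact is a lower bound on φ in terms of ψ. Suppose ψ(u) < b. Then there is at least one coordinate j with q(N u_j/4) < b = q(3/4). Since q is nondecreasing, this forces N u_j/4 < 3/4, i.e. u_j < 3/N. Evaluating the j-th affine piece gives
\[
\phi(\bm u) \ge c_j - \tfrac{N}{8}u_j > 1 - \tfrac38 = \tfrac58 .
\]
This is too weak by itself, so we strengthen it. If many coordinates have u_j < 3/N, the maximum still only sees one facet, so counting does not help directly. The correct route is a quantitative converse.

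\emph{Claim.} If φ(u) ≤ t, then every j satisfies c_j − N u_j/8 ≤ t. Hence u_j ≥ 8(c_j − t)/N ≥ 8(1 − t)/N for every j.

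Now impose σ_m ≤ 0. This says φ(z^m) ≤ 5/4 − ψ(z^{m−1}).

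We run an induction from block 1. We have ψ(z^0) = 1, so σ_1 ≤ 0 gives φ(z^1) ≤ 1/4. By the claim, z^1_j ≥ 8(3/4)/N = 6/N for every j. Then N z^1_j/4 ≥ 3/2 ≥ 1, so q = 1 in every coordinate and ψ(z^1) = 1.

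Inductively, suppose ψ(z^{m−1}) = 1. Then σ_m ≤ 0 gives φ(z^m) ≤ 1/4, and the same computation yields ψ(z^m) = 1. Carrying this through to m = M gives ψ(z^M) = 1 ≥ b, which contradicts the hypothesis. Therefore some σ_m(z) > 0, and R(z) ≥ [σ_m(z)]_+ > 0.

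Strictly, the hypothesis with b is not even needed: the weaker hypothesis ψ(z^M) < 1 already suffices. The constant b presumably matters only in later quantitative lemmas.

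The only points needing care are the following.
\begin{itemize}
\item The claim is immediate, since φ dominates each affine piece and c_j ≥ 1.
\item The arithmetic: we need 8(1 − 1/4)/N · N/4 = 3/2 ≥ 1, so q saturates.
\item The boundary case σ_m = 0 must be included in the contradiction hypothesis, which it is, since we assume σ_m ≤ 0.
\end{itemize}
None of these is a real obstacle. The main step is recognising that the chain propagates full progress ψ = 1 block by block.
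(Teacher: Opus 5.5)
Your proof is correct and is essentially the paper's argument in contrapositive form. The paper takes the smallest $\widehat m$ with $\psi(\z^{\widehat m})<b$ and combines the facet bound $\phi(\z^{\widehat m})\ge c_j-\tfrac{N}{8}z^{\widehat m}_j>\tfrac{5}{8}$ with $\psi(\z^{\widehat m-1})\ge b$ to obtain $\sigma_{\widehat m}(\z)>\tfrac{7}{32}$, whereas you assume $\sigma_m(\z)\le 0$ for all $m$ and use the same facet bound to propagate $\psi(\z^m)=1$ forward from $\psi(\z^0)=1$. Your observation that the hypothesis $\psi(\z^M)<1$ already suffices is also correct; the paper's threshold $b$ only buys the quantitative margin $\tfrac{7}{32}$, which is not used afterwards.
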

\begin{proof}[Proof of Lemma~\ref{lem:terminal-activation}]
Set \(p_0:=1\) and \(p_m:=\psi(\z^m)\) for \(m\in[M]\).
Then
\[
 \sigma_m(\z)
 =
 \phi(\z^m)+p_{m-1}-\frac54,
 \qquad \forall m\in[M].
\]
Let \(\widehat m\) be the smallest index such that
\(p_{\widehat m}<b\). Such an index exists by the assumption \(p_M<b\).
Its minimality and the convention \(p_0=1\) give
$
 p_{\widehat m-1}\geq b.
$ 

Since the function \(q\) is nondecreasing, \(p_{\widehat m}<q(\tfrac{3}{4})\) implies that
$
 \tfrac N4z_j^{\widehat m}<\frac34
$
for some \(j\in[N]\).  Since \(c_j\geq1\), we have 
\[
 \phi(\z^{\widehat m})
 \geq
 c_j-\frac N8z_j^{\widehat m}
 >
 \frac58.
\]
Consequently,
\[
 \sigma_{\widehat m}(\z)
 >
 \frac58+b-\frac54
 =
 \frac7{32}
 >0.
\]
Thus at least one phase score is positive, and hence \(R(\z)>0\).
\end{proof}

We next show that an active phase score implies that the entire subdifferential is
uniformly separated from the origin.

\begin{lemma}
\label{lem:active-subgradient-barrier}
If \(R(\z)>0\), then every
\(\g\in\partial\bar f(\z)\) satisfies
$
 \norm{\g}\geq\tfrac{1}{4\sqrt N}.
$
\end{lemma}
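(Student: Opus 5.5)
The plan is to show that when $R(\z)>0$ the subdifferential is a single generating set $\mathcal G(\bm w^\star,\z)$, and that within each block the fan term and the progress term point in the same (nonpositive) orthant, so they cannot cancel.

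\emph{Step 1 (unique active weight).} If $R(\z)=\norm{[\boldsymbol\sigma(\z)]_+}>0$, then $\mathcal I(\z)$ is a singleton. The maximizer of $\langle\bm w,\boldsymbol\sigma(\z)\rangle$ over $\mathcal W$ is $\bm w^\star=[\boldsymbol\sigma(\z)]_+/R(\z)$. Uniqueness follows from the equality case of Cauchy--Schwarz: any maximizer satisfies $\langle \bm w,[\boldsymbol\sigma]_+\rangle\ge\langle\bm w,\boldsymbol\sigma\rangle=R$ with $\norm{\bm w}\le1$, which forces $\bm w=[\boldsymbol\sigma]_+/R$. By Lemma~\ref{lem:full-subdiff}, $\partial\bar f(\z)=\mathcal G(\bm w^\star,\z)$, which is already convex. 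Moreover $\norm{\bm w^\star}=1$, and $w^\star_m>0$ exactly when $\sigma_m(\z)>0$.

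\emph{Step 2 (sign structure in active blocks).} Fix $m$ with $w^\star_m>0$, so $\sigma_m(\z)>0$. Since $\psi(\z^{m-1})\le1$, this gives $\phi(\z^m)>\frac54-1=\frac14>0$. Hence the constant piece $0$ of $\phi$ is inactive at $\z^m$, and
\[
\partial\phi(\z^m)=\operatorname{conv}\Bigl\{-\tfrac N8\e_j : j \text{ active}\Bigr\}.
\]
So $\bm u_m=-\frac N8\bm\lambda$ for some $\bm\lambda$ in the probability simplex of $\R^N$. On the other hand, $\nabla\psi(\z^m)=\frac14\bigl(q'(\tfrac N4 z^m_j)\bigr)_j$ has nonnegative entries because $q'\ge0$, and $a_m(\bm w^\star)=1-w^\star_{m+1}\in[0,1]$. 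Therefore
\[
\g^m=\frac2N\Bigl(-w_m^\star\tfrac N8\bm\lambda-a_m\nabla\psi(\z^m)\Bigr)
\]
has every coordinate nonpositive, and each coordinate is at least as large in absolute value as the corresponding coordinate of $-\frac14 w^\star_m\bm\lambda$. Hence $\norm{\g^m}\ge\frac14 w^\star_m\norm{\bm\lambda}\ge \frac{w^\star_m}{4\sqrt N}$, where the last inequality uses $\norm{\bm\lambda}\ge\norm{\bm\lambda}_1/\sqrt N=1/\sqrt N$.

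\emph{Step 3 (summing blocks).} Blocks with $w^\star_m=0$ contribute $\norm{\g^m}^2\ge0$. Using the orthogonality of blocks,
\[
\norm{\g}^2=\sum_m\norm{\g^m}^2\ge\frac{1}{16N}\sum_m (w^\star_m)^2=\frac1{16N},
\]
which gives $\norm{\g}\ge\frac1{4\sqrt N}$.

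The main obstacle is ruling out cancellation between the fan subgradient and the smooth progress gradient $\nabla\psi$ of the same block. The sign observation in Step~2 handles this: both terms lie in the nonpositive orthant, since $\phi$'s slopes are $-\frac N8\e_j$, $q'\ge0$, and $a_m\ge0$. A secondary point that needs care is excluding the zero piece of $\phi$; this is done via $\sigma_m>0\Rightarrow\phi(\z^m)>\frac14$.
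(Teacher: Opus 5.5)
Your proposal is correct and follows the paper's proof essentially step for step. The shared steps are: the unique active weight $\bm w^\star=[\boldsymbol\sigma(\z)]_+/R(\z)$; exclusion of the zero branch of $\phi$ via $\phi(\z^m)>\tfrac14$; the observation that $w_m^\star\bm u_m$ and $-a_m(\bm w^\star)\nabla\psi(\z^m)$ are both coordinatewise nonpositive, so they cannot cancel; and summation over blocks using $\sum_m (w_m^\star)^2=1$. The only addition is your Cauchy--Schwarz argument for uniqueness of the maximizer over $\mathcal W$, which the paper simply asserts.
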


\begin{proof}[Proof of Lemma~\ref{lem:active-subgradient-barrier}]
Recall that
\[
 R(\z)
 =
 \max_{\bm{w}\in\mathcal W}
 \sum_{m=1}^M w_m\sigma_m(\z).
\]
Since \(R(\z)>0\), the active weight is unique and given by
\[
 w_m
 =
 \frac{[\sigma_m(\z)]_+}{R(\z)},
 \quad \text{and} \quad
 \sum_{m=1}^Mw_m^2=1.
\]
By Lemma \ref{lem:full-subdiff}, every
\(\g\in\partial\bar f (\z)\) has block components
\[
 \g^m
 =
 \frac2N
 \left(
   w_m\bm{u}_m
   -
   a_m(\bm{w})\nabla\psi(\z^m)
 \right),
 \qquad
 \bm{u}_m\in\partial\phi(\z^m).
\]
Fix \(m\) such that \(w_m>0\). Then \(\sigma_m(\z)>0\). Since
\(\psi(\z^{m-1})\leq1\), the definition of \(\sigma_m\) gives
$
 \phi(\z^m)
 >
 \tfrac54-\psi(\z^{m-1})
 \geq
 \tfrac14
 >0.
$
Thus the zero branch of \(\phi\) is inactive, and every
\(\bm{u}_m\in\partial\phi(\z^m)\) can be written as
\[
 \bm{u}_m
 =
 -\frac N8
 \sum_{j\in\mathcal N_m}\lambda_j \e_j,
 \qquad
 \lambda_j\geq0,
 \qquad
 \sum_{j\in\mathcal N_m}\lambda_j=1,
\]

where \(\mathcal N_m\) is the set of active affine branches.  Consequently,
$ \bm{u}_m\leq\bz$ and $ \norm{\bm{u}_m}_1=\frac N8$.
Since \(\norm{\bm{u}_m}_1\leq\sqrt N\norm{\bm{u}_m}\), it follows that
$
 \norm{\bm{u}_m}\geq\tfrac{\sqrt N}{8}.
$
Moreover, \(a_m(\bm{w})\geq0\) and
\(\nabla\psi(\z^m)\geq\bz\) coordinatewise. Hence
\(w_m\bm{u}_m\) and \(-a_m(\bm{w})\nabla\psi(\z^m)\) are both
coordinatewise nonpositive and cannot cancel. Therefore,
\[
 \norm{\g^m}
 \geq
 \frac2Nw_m\norm{\bm{u}_m}
 \geq
 \frac{w_m}{4\sqrt N}.
\]
Summing over the blocks yields
\[
 \norm{\g}^2
 =
 \sum_{m=1}^M\norm{\g^m}^2
 \geq
 \frac1{16N}\sum_{m=1}^Mw_m^2
 =
 \frac1{16N}.
\]
\end{proof}
The preceding two lemmas yield the stationarity obstruction required in
Condition~\textup{(C3)}.

\begin{proposition}
\label{prop:terminal-certificate}
Let
$
 r_N:=\lfloor\frac N2\rfloor$ and $
 H_{M,N}:=(M-1)L+r_N.
$ 
Then
$
 H_{M,N}\geq\tfrac{MN}{120},
$
and every \(\y\in\mathcal V_{H_{M,N}-1}\) satisfies
\[
 \norm{\nabla\bar f_{1/2}(\y)}
 >
 \frac{1}{8\sqrt N}.
\]
\end{proposition}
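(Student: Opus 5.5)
The plan has two parts: a short arithmetic check for the horizon, and a proximal-point argument for the Moreau gradient that reduces to Lemmas~\ref{lem:terminal-activation} and~\ref{lem:active-subgradient-barrier}.

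\textbf{Horizon.} In the proof of Lemma~\ref{lem:diagonal-width} we saw that $N\ge 60$ gives $L\ge N/6$. Since $M\ge2$, we also have $M-1\ge M/2$. Hence
\[
H_{M,N}\ge (M-1)L\ge \frac{M}{2}\cdot\frac N6=\frac{MN}{12}\ge\frac{MN}{120}.
\]
In addition $r_N\ge 1$, so $H_{M,N}\in\mathbb N_+$.

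\textbf{Moreau gradient.} Fix $\y\in\mathcal V_{H_{M,N}-1}$.

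First, the proximal point is well defined. Since $\bar f$ is $1$-weakly convex (Proposition~\ref{prop:regularity}), the map $\z\mapsto\bar f(\z)+\norm{\z-\y}^2$ is $1$-strongly convex. So $\widehat{\z}:=\prox_{1/2\bar f}(\y)$ is unique, and $\nabla\bar f_{1/2}(\y)=2(\y-\widehat{\z})$.

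Second, the optimality condition gives $2(\y-\widehat{\z})\in\partial\bar f(\widehat{\z})$, using subdifferential regularity of weakly convex functions and the sum rule. Hence $\nabla\bar f_{1/2}(\y)$ is itself a subgradient of $\bar f$ at $\widehat{\z}$.

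Third, I split into two cases.

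\emph{Case 1: $\norm{\y-\widehat{\z}}\ge \frac{1}{8\sqrt N}$.} Then $\norm{\nabla\bar f_{1/2}(\y)}\ge\frac1{4\sqrt N}>\frac1{8\sqrt N}$ directly.

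\emph{Case 2: $\norm{\y-\widehat{\z}}<\frac1{8\sqrt N}$.} Here I want to show that $\psi(\widehat{\z}^M)<b$.
\begin{enumerate}[label=(\alph*)]
\item The index computation $k_M(H_{M,N}-1)=H_{M,N}-1-(M-1)L+1=r_N$ shows that $\y^M$ has at most $r_N\le N/2$ nonzero coordinates.
\item Since $q\le 1$ and $q(0)=0$, this gives $\psi(\y^M)\le r_N/N\le \frac12$.
\item By Lemma~\ref{lem:progress-regularity}, $\psi$ is $\frac{3\sqrt N}{8}$-Lipschitz. Therefore
\[
\psi(\widehat{\z}^M)\le \frac12+\frac{3\sqrt N}{8}\cdot\frac{1}{8\sqrt N}=\frac12+\frac3{64}<\frac{27}{32}=b.
\]
\end{enumerate}
Lemma~\ref{lem:terminal-activation} then yields $R(\widehat{\z})>0$. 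Lemma~\ref{lem:active-subgradient-barrier} shows that every element of $\partial\bar f(\widehat{\z})$ has norm at least $\frac1{4\sqrt N}$. In particular this holds for $\nabla\bar f_{1/2}(\y)$, which again exceeds $\frac1{8\sqrt N}$.

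\textbf{Main obstacle.} The delicate point is the transfer from $\y$ to $\widehat{\z}$. The reachable-region information (only $r_N$ coordinates of the terminal block are revealed) controls $\y$, whereas the subgradient barrier applies at $\widehat{\z}$. The case split on $\norm{\y-\widehat{\z}}$, combined with the Lipschitz bound on $\psi$, is what bridges the two. The threshold $\frac1{8\sqrt N}$ is chosen so that both cases close with room to spare.
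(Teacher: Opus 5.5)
Your proof is correct and follows the paper's argument. You pass to \(\widehat{\z}=\prox_{1/2\bar f}(\y)\), use that \(2(\y-\widehat{\z})=\nabla\bar f_{1/2}(\y)\in\partial\bar f(\widehat{\z})\), show \(\psi(\widehat{\z}^M)<b\) whenever this subgradient is small, and conclude with Lemmas~\ref{lem:terminal-activation} and~\ref{lem:active-subgradient-barrier}. The differences are minor: you split into cases where the paper argues by contradiction. You also move the progress bound from \(\y\) to \(\widehat{\z}\) via the global bound \(\norm{\nabla\psi}\le 3\sqrt N/8\) (error \(3/64\)), whereas the paper uses \(z_j^M=-\tfrac12 d_j^M\) on the hidden coordinates \(j>r_N\) with \(q(t)\le 3t_+^2\) (error \(3/4096\)); either estimate stays comfortably below \(b\).
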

\begin{proof}[Proof of Proposition~\ref{prop:terminal-certificate}]
We first verify the lower bound on \(H_{M,N}\). Since \(N\geq60\),
\[
 L
 =
\left\lfloor\frac{3N}{16}\right\rfloor
 \geq
 \frac{3N}{16}-1
 \geq
 \frac N6.
\]
Moreover, \(M\geq2\) implies \(M-1\geq \tfrac{M}{2}\).  Therefore,
$
 H_{M,N}
 \geq
 (M-1)L
 \geq
 \frac{MN}{12}
 \geq
 \frac{MN}{120}.
$

Now fix \(\y\in\mathcal V_{H_{M,N}-1}\).  For every \(j>r_N\),
\[
 \operatorname{lev}(M,j)
 =
 (M-1)L+j-1
 \geq
 (M-1)L+r_N
 =
 H_{M,N}.
\]
By the definition of the filtration, it follows that
$
 y_j^M=0$ for all $
 j=r_N+1,\ldots,N.$
 Suppose, for contradiction, that
\[
 \norm{\nabla\bar f_{1/2}(\y)}
 \leq 
 \frac{1}{8\sqrt N}.
\]
Define $
 \z
 :=
 \prox_{1/2\bar f}(\y)$ and $ \d:=2(\y-\z)$.
Then, by \citet[Lemma 2.2]{DavisDrusvyatskiy2019} and the optimality condition of the proximal operator, we have $\d=\nabla\bar f_{1/2}(\y)$ and 
$\bz\in\partial\bar f(\z)+2(\z-\y)$.
Hence, \(\d\in\partial\bar f(\z)\).
 
For \(j>r_N\), the identity \(y_j^M=0\) implies
$
 z_j^M=-\frac12\d_j^M.
$ 
Using \(q\leq1\) and \(q(t)\leq3t_+^2\), we obtain
\begin{align*}
 \psi(\z^M)
 &=
 \frac1N\sum_{j=1}^N
 q\left(\frac N4z_j^M\right)\\
 &\leq
 \frac{r_N}{N}
 +
 \frac1N\sum_{j=r_N+1}^N
 3\left(
   -\frac N8\d_j^M
 \right)_+^2\\
 &\leq
 \frac12
 +
 \frac{3N}{64}
 \sum_{j=r_N+1}^N\left(\d_j^M\right)^2\\
 &\leq
 \frac12
 +
 \frac{3N}{64}\norm{\d}^2\leq
 \frac12+\frac3{4096}<
 \frac{27}{32}
 =
 b.
\end{align*}

Applying Lemmas \ref{lem:terminal-activation} and
\ref{lem:active-subgradient-barrier} to
\(\d\in\partial\bar f(\z)\) yields
$
 \norm{\d}
 \geq
 \tfrac{1}{4\sqrt N},
$
which leads to a contradiction.
\end{proof}

\subsection{Initial gap}
\label{sec:gap}
Finally, we prove Condition \ref{cond:normalized-hard-instance} (C4). 
\begin{proposition}[Initial-gap certificate]
\label{prop:gap}
$
 \bar f(\bz)-\inf\bar f
 \leq
 3\frac MN.
$
\end{proposition}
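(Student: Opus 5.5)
The plan is to combine an exact evaluation of $\bar f(\bz)$ with the global lower bound \eqref{eq:lower_f} from the proof of Proposition~\ref{prop:regularity}.

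\textbf{Lower bound.} From \eqref{eq:lower_f} we already have $\inf\bar f\geq -\tfrac{2M}{N}$. This uses only that the positive-part norm is nonnegative and that $0\leq\psi\leq 1$ by Lemma~\ref{lem:progress-regularity}.

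\textbf{Value at the origin.} It then suffices to show $\bar f(\bz)\leq \tfrac{M}{N}$; in fact I expect a much smaller value. At $\z=\bz$ every block vanishes, so $\psi(\z^m)=0$ for all $m\in[M]$ because $q(0)=0$. Also $\phi(\bz)=c_1=1+\tfrac{N-1}{16N}$.

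For $m=1$, the convention $\psi(\z^0)=1$ gives
\[
\sigma_1(\bz)=c_1+1-\tfrac54=\tfrac34+\tfrac{N-1}{16N}\leq \tfrac{13}{16}.
\]
For $m\geq 2$ we have
\[
\sigma_m(\bz)=c_1-\tfrac54=-\tfrac{3}{16}-\tfrac{1}{16N}<0,
\]
so these scores drop out of the positive part. This is the same computation as in Remark~\ref{rem:block-relay}.

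Consequently $\norm{[\boldsymbol\sigma(\bz)]_+}=\sigma_1(\bz)\leq \tfrac{13}{16}$. Since the progress sum vanishes at the origin, \eqref{eq:hard} gives $\bar f(\bz)\leq \tfrac{2}{N}\cdot\tfrac{13}{16}<\tfrac{2}{N}$.

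\textbf{Combining.} Putting the two bounds together,
\[
\bar f(\bz)-\inf\bar f\leq \frac{2}{N}+\frac{2M}{N}\leq \frac{3M}{N},
\]
where the last step uses $M\geq 2$ from admissibility, so that $2\leq M$.

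There is no real obstacle here. The only points needing care are the convention $\psi(\z^0)=1$ for the first block and the use of $M\geq 2$ to absorb the constant term $\tfrac{2}{N}$ into $\tfrac{M}{N}$.
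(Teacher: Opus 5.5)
Your proposal is correct and follows essentially the same route as the paper. The paper also combines $\inf\bar f\geq -\tfrac{2M}{N}$ from \eqref{eq:lower_f} with the exact value $\bar f(\bz)=\tfrac2N\bigl(\tfrac{13}{16}-\tfrac1{16N}\bigr)<\tfrac2N$, where only $\sigma_1(\bz)$ is positive, and then uses $M\geq 2$ to conclude $\tfrac{2(M+1)}{N}\leq\tfrac{3M}{N}$.
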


\begin{proof}[Proof of Proposition~\ref{prop:gap}]
Equation~\eqref{eq:lower_f} gives
$
 \inf\bar f\geq-\tfrac{2M}{N}.
$ It remains to evaluate the function at the origin.  Since
$
 \psi(\bz)=0$ and $\phi(\bz)=c_1$,
the phase scores satisfy
\begin{align*}
 \sigma_1(\bz)
 &=
 c_1+1-\frac54
 =
 \frac{13}{16}-\frac1{16N},\\
 \sigma_m(\bz)
 &=
 c_1-\frac54
 =
 -\frac3{16}-\frac1{16N},
 \qquad m=2,\ldots,M.
\end{align*}
Thus \(\sigma_1(\bz)>0\), whereas \(\sigma_m(\bz)<0\) for every
\(m\geq2\). It follows
that
\[
 \bar f(\bz)
 =
 \frac2N
 \left(
   \frac{13}{16}-\frac1{16N}
 \right)
 <
 \frac2N.
\]
Putting everything together yields 
\[
 \bar f(\bz)-\inf\bar f
 <
 \frac{2(M+1)}{N}
 \leq
 \frac{3M}{N},
\]
where the last inequality uses \(M\geq2\).
\end{proof}
\subsection{Proof of Theorem~\ref{thm:main}}
\label{subsec:proof-main}

\begin{proof}[Proof of Theorem~\ref{thm:main}]
Fix any admissible pair \((M,N)\). Consider the function \(\bar f\)
constructed in Section~\ref{sec:construction}, the diagonal filtration
\(\{\mc V_t\}_{t\geq-1}\) defined in Section~\ref{sec:locality}, and the
horizon \(H_{M,N}\) defined in
Proposition~\ref{prop:terminal-certificate}.

Propositions~\ref{prop:regularity},
\ref{prop:diagonal-locality},
\ref{prop:terminal-certificate}, and
\ref{prop:gap} establish
Condition~\ref{cond:normalized-hard-instance}
\textup{(C1)}--\textup{(C4)}, respectively. Therefore, the hypotheses of
Proposition~\ref{prop:framework-reduction} hold for every admissible pair
\((M,N)\). Applying that proposition proves Theorem~\ref{thm:main} with $c_0=\frac{1}{7680}
    \text{ and }
    c_1=\frac{1}{47185920}.$
\end{proof}

\section{From zero-respecting to arbitrary deterministic algorithms}
\label{sec:rotation-reduction}
In this section, we adapt the finite horizon resisting oracle framework of
\citet{carmon2020lower} from smooth gradient oracles to the full subdifferential
oracle for nonsmooth weakly convex functions. 
Exploiting the isometric invariance of the function class, oracle responses, and Moreau envelope stationarity, we show that any finite trajectory of a deterministic first-order algorithm can be simulated by a generalized zero-respecting algorithm on the unrotated instance.
 This reduction lifts 
Theorem \ref{thm:main} to the class \(\cA_{\det}\) without altering its dependence on
\(G,\rho,\Delta\), and \(\epsilon\).

For the ambient function \(f:\R^d\to\R\) and \(\bU\in\operatorname O(d',d)\), the rotated function $f_\bU:\R^{d'}\to\R$ is defined by  
\[
 f_{\bU}(\x):=f(\bU^\top\x).
\]

The following lemma records that the function class, the full subdifferential oracle, and the stationarity measure are all preserved under this rotation and lifting operation.

\begin{lemma}[Rotation invariance]
\label{lem:isometric-invariance}
 Let $f\in\mathcal F(G,\rho,\Delta)$ be defined on $\R^d$, and let $\bU\in\operatorname O(d',d)$. Then $f_{\bU}\in\mathcal F(G,\rho,\Delta).$
\end{lemma}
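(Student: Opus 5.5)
The plan is to verify the three defining properties of $\mathcal F(G,\rho,\Delta)$ for $f_{\bU}(\x)=f(\bU^\top\x)$ one at a time. Throughout we use only two facts about $\bU\in\operatorname O(d',d)$: $\bU^\top\bU=\bI_d$, and $\bU^\top$ is a contraction, since $\|\bU^\top\x\|\le\|\x\|$ because $\bU\bU^\top$ is an orthogonal projection.

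\textbf{Lipschitz continuity.} For $\x,\y\in\R^{d'}$,
\[
|f_{\bU}(\x)-f_{\bU}(\y)|\le G\|\bU^\top(\x-\y)\|\le G\|\x-\y\|.
\]

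\textbf{Weak convexity.} We decompose $\x=\bU\bU^\top\x+(\bI-\bU\bU^\top)\x$, which gives
\[
\|\x\|^2=\|\bU^\top\x\|^2+\|(\bI-\bU\bU^\top)\x\|^2 .
\]
Hence
\[
f_{\bU}(\x)+\tfrac\rho2\|\x\|^2=\Bigl(f+\tfrac\rho2\|\cdot\|^2\Bigr)(\bU^\top\x)+\tfrac\rho2\|(\bI-\bU\bU^\top)\x\|^2 .
\]
The first term is a convex function composed with a linear map, so it is convex. The second is a convex quadratic. Their sum is convex, so $f_{\bU}$ is $\rho$-weakly convex.

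\textbf{Initial gap.} We have $f_{\bU}(\bz)=f(\bz)$. For the infimum, $\bU^\top$ is surjective onto $\R^d$: given $\z\in\R^d$, the point $\x=\bU\z$ satisfies $\bU^\top\x=\z$. Therefore $\inf f_{\bU}=\inf f$, and the gap is unchanged and still at most $\Delta$.

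None of these steps poses a real obstacle. The only point needing care is weak convexity, where $\|\x\|^2\neq\|\bU^\top\x\|^2$ in general when $d'>d$. The orthogonal decomposition above handles this, because the discrepancy is a nonnegative convex quadratic.

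If the later argument also needs oracle and Moreau-envelope invariance, it would follow the same pattern. The chain rule for a regular function composed with a linear map gives $\partial f_{\bU}(\x)=\bU\,\partial f(\bU^\top\x)$. For the Moreau envelope, the prox of $f_{\bU}$ at $\x$ equals $\bU\,\prox_{f/(2\rho)}(\bU^\top\x)+(\bI-\bU\bU^\top)\x$. This follows by splitting the proximal objective into components along $\operatorname{range}\bU$ and its complement. It yields
\[
\nabla (f_{\bU})_{1/(2\rho)}(\x)=\bU\,\nabla f_{1/(2\rho)}(\bU^\top\x),
\]
whose norm matches the norm of $\nabla f_{1/(2\rho)}(\bU^\top\x)$.
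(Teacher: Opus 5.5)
Your proposal is correct and matches the paper's proof essentially step for step. Both use the orthogonal decomposition $\|\x\|^2=\|\bU^\top\x\|^2+\|(\bI_{d'}-\bU\bU^\top)\x\|^2$ for weak convexity, the contraction $\|\bU^\top\x\|\le\|\x\|$ for the Lipschitz bound, and surjectivity of $\bU^\top$ together with $f_{\bU}(\bz)=f(\bz)$ for the initial gap. Your optional remarks on the subdifferential and the Moreau envelope match the paper's appendix lemma as well.
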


\begin{proof}[Proof of Lemma~\ref{lem:isometric-invariance}]
First, \(f_{\bU}\) remains \(\rho\)-weakly convex, since
\[
f_{\bU}(\x)+\frac{\rho}{2}\|\x\|^2
=
\left(f+\frac{\rho}{2}\|\cdot\|^2\right)(\bU^\top\x)
+\frac{\rho}{2}\|(\bI_{d'}-\bU\bU^\top)\x\|^2
\]
is convex, where we used \(\bU^\top\bU=\bI_d\). Second, the
\(G\)-Lipschitz property follows from
\(\|\bU^\top\x-\bU^\top\y\|\leq\|\x-\y\|\).
Finally, since \(\bU^\top\) is surjective, it holds that $\inf_{\x\in\R^{d'}}f_{\bU}(\x)=\inf_{\z\in\R^d}f(\z)$ and $f_{\bU}(\bU\bz)=f(\bz)$,
so the initial gap is unchanged. Hence
\(f_{\bU}\in\mathcal F(G,\rho,\Delta)\).
\end{proof}

We next extend the finite horizon resisting oracle construction of \citet[Lemma~7]{carmon2020lower} from smooth gradient oracles to full subdifferential oracles. We repeat the argument here for completeness’ sake.

\begin{lemma}[Finite horizon resisting oracle]
\label{lem:finite-horizon-resisting-oracle}
For any \(T_0\in\N\) and
\(\sA_{\textrm{det}}\in\mathcal A_{\textrm{det}}\), there exists a
deterministic generalized zero-respecting algorithm
\(\sA_{\textrm{zr}}\in\mathcal A_{\textrm{zr}}\) such that, for every
weakly convex function \(f:\R^d\to\R\), one can find
\(\bU\in\operatorname O(d+T_0,d)\) satisfying
\[
 \sA_{\textrm{zr}}^{(t)}[f]
 =
 \bU^\top\sA_{\textrm{det}}^{(t)}[f_{\bU}],
 \qquad t=0,\ldots,T_0-1.
\]
\end{lemma}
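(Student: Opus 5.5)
The plan is to adapt the adaptive-rotation construction of \citet[Lemma~7]{carmon2020lower}. The zero-respecting algorithm $\sA_{\textrm{zr}}$ will run $\sA_{\textrm{det}}$ internally on a virtual instance $f_{\bU}$ in $\R^{d+T_0}$. It will build the columns $\bU=[\bm u_1,\ldots,\bm u_d]$ lazily: the column $\bm u_j$ is fixed only at the moment coordinate $j$ first appears in $\supp(\partial f(\x^s))$ for some earlier query $\x^s$.

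\textbf{Oracle transfer.} First, I would record how the oracle transforms under rotation. Since $f$ is weakly convex, hence subdifferentially regular, and $\bU^\top:\R^{d+T_0}\to\R^d$ is a surjective linear map, the chain rule \cite[Theorem 10.6 / Exercise 10.7]{rockafellar2009variational} gives
\[
 f_{\bU}(\y)=f(\bU^\top\y),
 \qquad
 \partial f_{\bU}(\y)=\bU\,\partial f(\bU^\top\y).
\]
It suffices to verify this for the convex function $f+\frac\rho2\|\cdot\|^2$ composed with $\bU^\top$, and then subtract the smooth quadratic. Consequently, the response $\mathbb O_{f_{\bU}}(\y^s)$ is determined by two pieces of data:
\begin{itemize}
 \item the response $\mathbb O_f(\x^s)$ at $\x^s:=\bU^\top\y^s$;
 \item the columns $\bm u_j$ with $j\in\supp(\partial f(\x^s))$.
\end{itemize}
Indeed, each $\bU\g$ with $\g\in\partial f(\x^s)$ equals $\sum_{j\in\supp(\g)}g_j\bm u_j$.

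\textbf{Inductive simulation.} At step $t$, let $\mathcal S_t:=\bigcup_{s<t}\supp(\partial f(\x^s))$. Suppose the columns $\{\bm u_j\}_{j\in\mathcal S_t}$ are already fixed. Then $\sA_{\textrm{zr}}$ can reconstruct the simulated transcript $\{\mathbb O_{f_{\bU}}(\y^s)\}_{s<t}$ and compute $\y^t=\Gamma_t(\cdot)$. It outputs $\x^t:=\bU^\top\y^t$.

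To make this well defined and zero-respecting, I maintain the invariant that each column $\bm u_j$ is chosen orthogonal to all columns fixed before it and to every virtual query $\y^s$ issued before $j$ entered $\mathcal S$. Given this invariant, $(\bU^\top\y^t)_j=\langle\bm u_j,\y^t\rangle=0$ for every $j\notin\mathcal S_t$, whichever column is eventually assigned to $j$. Hence $\x^t$ is computable from the already-fixed columns, and $\supp(\x^t)\subseteq\mathcal S_t$, which is exactly Definition~\ref{def:zero-respecting}.

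\textbf{Choosing the new columns.} After querying $\x^t$, the algorithm must assign a column to each newly revealed index $j\in\supp(\partial f(\x^t))\setminus\mathcal S_t$. These columns must be orthonormal and orthogonal to $\operatorname{span}\{\y^0,\ldots,\y^t\}$ and to the earlier columns. That span has dimension at most $(t+1)+|\mathcal S_{t+1}|-1\le T_0+d-1$ whenever $t\le T_0-1$, so the required orthogonal complement in $\R^{d+T_0}$ always exists.

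To make $\sA_{\textrm{zr}}$ deterministic and independent of $f$, I would fix a canonical rule: Gram--Schmidt applied to the standard basis of $\R^{d+T_0}$, taking the first vectors that survive. This choice depends only on the past transcript. After $T_0$ steps, any unassigned columns are completed by the same rule, orthogonal to all $\y^0,\ldots,\y^{T_0-1}$. This preserves $(\bU^\top\y^s)_j=0$ for those indices, so $\bU\in\operatorname O(d+T_0,d)$. An induction on $t$ then shows that the simulated queries coincide with the true queries $\sA_{\textrm{det}}^{(t)}[f_{\bU}]$ for this final $\bU$. Therefore $\sA_{\textrm{zr}}^{(t)}[f]=\bU^\top\sA_{\textrm{det}}^{(t)}[f_{\bU}]$ for $t<T_0$.

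\textbf{Main obstacle.} The delicate step is the oracle transfer. One must check that the full-subdifferential chain rule holds with equality, not mere inclusion, so that the whole set $\partial f_{\bU}(\y)$ is reconstructible from columns already fixed. One must also check that the dimension count leaves room at every step even when many coordinates are revealed at once. Both rest on regularity of weakly convex functions and on $|\mathcal S_t|\le d$.
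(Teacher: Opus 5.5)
Your proposal is correct and follows essentially the same route as the paper's proof. Both fix the columns of $\bU$ lazily by a canonical Gram--Schmidt rule, with each new column orthogonal to the earlier columns and to all virtual queries so far, so that $\bU^\top$ of each virtual query is computable and supported on the revealed set; both then use the exact chain rule $\partial f_{\bU}=\bU\,\partial f(\bU^\top\cdot)$ to reproduce the full-subdifferential responses, the dimension count in $\R^{d+T_0}$, and a final orthogonal completion (the paper additionally spells out the index ordering for columns assigned in the same round and the trivial extension of $\sA_{\textrm{zr}}$ by $\bz$ for $t\ge T_0$).
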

\begin{proof}[Proof of Lemma~\ref{lem:finite-horizon-resisting-oracle}]
Let \(d':=d+T_0\). We fix the following deterministic rule for
selecting orthogonal directions. Given a subspace
\(\mathcal L\subseteq\R^{d'}\), project the standard basis vectors
\(\bm e_1,\ldots,\bm e_{d'}\) onto \(\mathcal L^\perp\), and apply
Gram--Schmidt in this fixed order, discarding zero residuals.
Whenever several vectors are required, we assign the resulting
orthonormal vectors according to the increasing order of their
indices.

We construct \(\sA_{\zr}\) recursively by simulating
\(\sA_{\det}\). To describe its execution, fix an arbitrary weakly
convex function \(f:\R^d\to\R\). Let
\(\z^t\in\R^d\) denote the queries generated by \(\sA_{\zr}\), let
\(\x^t\in\R^{d'}\) denote the virtual queries of \(\sA_{\det}\), and let
\(\widehat{\mathbb O}^{\,t}\) denote the simulated oracle responses
supplied to \(\sA_{\det}\).

Set
\[
\mc I_0:=\varnothing,
\qquad \text{and} \qquad
\mc I_t:=
\bigcup_{s<t}\supp\bigl(\partial f(\z^s)\bigr),
\quad \forall t\geq1.
\]
We construct, along with the queries and responses, an orthonormal
family
$
\{\bm u_i\mid i\in \mc I_t\}\subseteq\R^{d'}.
$
The following properties will be maintained at the beginning of
round \(t\):
\begin{enumerate}[label=(\roman*)]
    \item
    \(\supp(\z^s)\subseteq \mc I_s\) for every \(s<t\);
     \item For every $s<t$ and $i\in \mc I_t$, one has
    $\langle\bm u_i,\x^s\rangle=z_i^s$.
    \item For every $s<t$, the simulated response is
    \[
    \widehat{\mathbb O}^{\,s}
    =
    \left(
        f(\z^s),
        \left\{
            \sum_{i\in \mc I_{s+1}}g_i\bm u_i
            ~\middle |~
            \g\in\partial f(\z^s)
        \right\}
    \right).
    \] 
    \item For every $s<t$, the vector 
    \(\x^s\) is the \(s\)-th query generated by \(\sA_{\det}\) from
    the simulated responses
    \(\widehat{\mathbb O}^{\,0},\ldots,
    \widehat{\mathbb O}^{\,s-1}\);
\end{enumerate}
These properties hold vacuously at \(t=0\). Suppose that they hold at
the beginning of some round \(t<T_0\). Since \(\sA_{\det}\) is
deterministic, the preceding simulated responses uniquely determine
its next query; denote it by \(\x^t\in\R^{d'}\). Define the next query of
\(\sA_{\zr}\) coordinatewise by
\[
z_i^t
:=
\begin{cases}
    \langle\bm u_i,\x^t\rangle,
        & i\in \mc I_t,\\
    0,
        & i\notin \mc I_t.
\end{cases}
\]
It follows immediately that
$\supp(\z^t)
\subseteq
\mc I_t
=
\bigcup_{s<t}\supp\bigl(\partial f(\z^s)\bigr),
$ 
so the query satisfies (i). 

After querying the oracle of \(f\) at \(\z^t\), define the set of
coordinates newly revealed at round \(t\) by
$
\mc N_t
:=
\supp\bigl(\partial f(\z^t)\bigr)\setminus\mc I_t$ 
and $\mc I_{t+1}:=\mc I_t\cup\mc N_t.$
Thus, \(\mc N_t\) may contain several coordinates revealed at the
current round. Define
$
\mathcal L_t
:=
\operatorname{span}
\left(
    \{\x^0,\ldots,\x^t\}
    \cup
    \{\bm u_i\mid i\in\mc I_t\}
\right).$
Since \({d'}=d+T_0\), we have
\[
\begin{aligned}
\dim(\mathcal L_t^\perp)
&\geq
{d'}-(t+1)-|\mc I_t| =
d-|\mc I_t|+(T_0-t-1) \geq
d-|\mc I_t|
\geq
|\mc N_t|,
\end{aligned}
\]
where the last inequality follows from
\(\mc N_t\subseteq[d]\setminus\mc I_t\). Therefore, using the fixed
orthogonal-extension rule, we may choose
$
\{\bm u_i\mid i\in\mc N_t\}
$
as an orthonormal family in \(\mathcal L_t^\perp\). In particular,
the new vectors are orthogonal to all previously selected vectors and
to \(\x^0,\ldots,\x^t\). 

We next verify property~(ii) at the beginning of round \(t+1\). 
If \(i\in\mc I_t\), then the induction hypothesis gives
$
\langle\bm u_i,\x^s\rangle=z_i^s$ for $s<t$,
while the definition of \(\z^t\) gives
$
\langle\bm u_i,\x^t\rangle=z_i^t.
$ 
Hence the desired identity holds for every \(s\leq t\) and
\(i\in\mc I_t\). If \(i\in\mc N_t\), then
\(\bm u_i\in\mathcal L_t^\perp\), and therefore
$\langle\bm u_i,\x^s\rangle=0$ 
for $s=0,\ldots,t.
$ 
Moreover, \(i\notin\mc I_t\), and hence
\(i\notin\mc I_s\) for every \(s\leq t\). Property~(i) for
\(s<t\), together with the definition of \(\z^t\), then gives
$z_i^s=0$ for
$ s=0,\ldots,t.
$ 
Since
\(\mc I_{t+1}=\mc I_t\cup\mc N_t\), property~(ii) holds at the
beginning of round \(t+1\).

Having selected all columns indexed by
\(\mc I_{t+1}=\mc I_t\cup\mc N_t\), we define the simulated oracle
response at round \(t\) by
\[
\widehat{\mathbb O}^{\,t}
:=
\left(
    f(\z^t),
    \left\{
        \sum_{i\in\mc I_{t+1}}g_i\bm u_i
        ~\middle |~
        \g\in\partial f(\z^t)
    \right\}
\right)
\]
and supply it to \(\sA_{\det}\). This verifies property~(iii) for
\(s=t\).

Moreover, \(\x^t\) was defined as the next query generated by
\(\sA_{\det}\) from the preceding simulated responses
$
\widehat{\mathbb O}^{\,0},\ldots,
\widehat{\mathbb O}^{\,t-1}.
$ 
Thus property~(iv) also holds for \(s=t\). Combining these observations
with the induction hypothesis, all four properties hold at the
beginning of round \(t+1\). This completes the induction.

Applying the induction result at \(t=T_0\), we have constructed
\(\x^t\), \(\z^t\), and \(\widehat{\mathbb O}^{\,t}\) for every
\(t<T_0\), together with the orthonormal family
\(\{\bm u_i\mid i\in\mc I_{T_0}\}\). Define
\[
\mathcal L
:=
\operatorname{span}
\left(
    \{\x^0,\ldots,\x^{T_0-1}\}
    \cup
    \{\bm u_i\mid i\in\mc I_{T_0}\}
\right).
\]
Since
$
\dim(\mathcal L^\perp)
\geq
{d'}-T_0-|\mc I_{T_0}|
=
d-|\mc I_{T_0}|,
$ 
we may use the same deterministic extension rule to choose
$
\{\bm u_i\mid i\notin\mc I_{T_0}\}
$
as an orthonormal family in \(\mathcal L^\perp\). Define
$
\bU:=[\bm u_1,\ldots,\bm u_d]
\in\operatorname O({d'},d).
$ 

Property~(ii) at \(t=T_0\) gives
$
\langle\bm u_i,\x^t\rangle=z_i^t$
for 
$t<T_0$ and $i\in\mc I_{T_0}.
$
For \(i\notin\mc I_{T_0}\), the final orthogonal completion gives
\(\langle\bm u_i,\x^t\rangle=0\), while property~(i) gives
\(z_i^t=0\). Hence the same identity holds for every \(i\in[d]\),
and therefore
$
\bU^\top\x^t=\z^t$
for  $t=0,\ldots,T_0-1.
$ 

We next verify that the simulated oracle responses coincide with the
genuine oracle responses of \(f_{\bU}\). For every \(t<T_0\), the
identity \(\bU^\top\x^t=\z^t\) gives
$ 
f_{\bU}(\x^t)
=
f(\bU^\top\x^t)
=
f(\z^t).
$ Moreover, by Lemma \ref{lem:isometric-identities}~(i),
\[
\begin{aligned}
\partial f_{\bU}(\x^t)
=
\bU\partial f(\bU^\top\x^t) =
\bU\partial f(\z^t) =
\left\{
    \sum_{i=1}^d g_i\bm u_i
    ~\middle|~
    \g\in\partial f(\z^t)
\right\}=
\left\{
    \sum_{i\in\mc I_{t+1}}g_i\bm u_i
    ~\middle|~
    \g\in\partial f(\z^t)
\right\}.
\end{aligned}
\]
Here, the last equality follows from
$
\supp\bigl(\partial f(\z^t)\bigr)
\subseteq\mc I_{t+1}.
$ 
Therefore, the genuine oracle response of \(f_{\bU}\) at \(\x^t\) is
exactly \(\widehat{\mathbb O}^{\,t}\).

Since \(\sA_{\det}\) is deterministic, the equality of the simulated
and genuine oracle transcripts implies, by induction on \(t\), that
$
\x^t
=
\sA_{\det}^{(t)}[f_{\bU}]$
for 
$t=0,\ldots,T_0-1.$
Consequently,
\[
\sA_{\zr}^{(t)}[f]
=
\z^t
=
\bU^\top\x^t
=
\bU^\top\sA_{\det}^{(t)}[f_{\bU}],
\qquad
t=0,\ldots,T_0-1.
\]
For \(t\geq T_0\), extend \(\sA_{\zr}\) by setting
\(\sA_{\zr}^{(t)}[f]:=\bz\). This extension remains generalized
zero-respecting. Thus \(\sA_{\zr}\) is a
deterministic generalized zero-respecting algorithm defined independently of \(f\). Since \(f\)
was arbitrary, we finish the proof. 
\end{proof}

We now combine Theorem~\ref{thm:main} with the preceding rotation invariance
and resisting oracle lemmas to extend the lower bound from generalized
zero-respecting algorithms to arbitrary deterministic algorithms. 

\begin{proof}[Proof of Theorem \ref{thm:det}]
Let \(c_0,c_1>0\) be the constants from Theorem \ref{thm:main}. Fix
\(G,\rho,\Delta,\epsilon>0\) such that
$\epsilon^2\leq c_0\min\{G^2,\rho\Delta\}.
$ By Theorem \ref{thm:main},  there exist \(d,T_0\in\N\) and
\(f\in\cF(G,\rho,\Delta)\), defined on \(\R^d\), such that
$
T_0\geq
c_1\frac{G^2\min\{\rho\Delta,G^2\}}{\epsilon^4},
$
and, for every \(\sA_{\zr}\in\cA_{\zr}\),
\[
\left\|
\nabla f_{1/(2\rho)}
\bigl(\sA_{\zr}^{(t)}[f]\bigr)
\right\|>\epsilon,
\qquad \forall t=0,\ldots,T_0-1.
\]

Fix any deterministic first-order algorithm
\(\sA_{\det}\in\cA_{\det}\) acting on \(\R^{d+T_0}\). By
Lemma \ref{lem:finite-horizon-resisting-oracle}, there exists \(\sA_{\zr}\in\cA_{\zr}\) such that, for the function \(f\) above, one
can find \(\bU\in\operatorname O(d+T_0,d)\) satisfying 
\[
\sA_{\zr}^{(t)}[f]
=
\bU^\top\sA_{\det}^{(t)}[f_{\bU}],
\qquad \forall t=0,\ldots,T_0-1.
\]
Moreover, Lemma \ref{lem:isometric-invariance} ensures that
\(f_{\bU}\in\cF(G,\rho,\Delta)\). For every
\(t=0,\ldots,T_0-1\), Lemma \ref{lem:isometric-identities}~(ii) gives
\[
\begin{aligned}
\left\|
\nabla(f_{\bU})_{1/(2\rho)}
\bigl(\sA_{\det}^{(t)}[f_{\bU}]\bigr)
\right\|
&=
\left\|
\nabla f_{1/(2\rho)}
\bigl(\bU^\top\sA_{\det}^{(t)}[f_{\bU}]\bigr)
\right\| =
\left\|
\nabla f_{1/(2\rho)}
\bigl(\sA_{\zr}^{(t)}[f]\bigr)
\right\|
>
\epsilon.
\end{aligned}
\]
Thus, \(\sA_{\det}\) does not produce an \(\epsilon\)-stationary point
of \(f_{\bU}\) within its first \(T_0\) queries. Since
\(\sA_{\det}\in\cA_{\det}\) was arbitrary,
\[
\cT_{\epsilon}\bigl(\cA_{\det},\cF(G,\rho,\Delta)\bigr)
\geq T_0
\geq
c_1\frac{G^2\min\{\rho\Delta,G^2\}}{\epsilon^4}.
\]
We finish the proof. 
\end{proof}

\section{Closing Remark}
\label{sec:close}
 We established a tight
\(\Omega({\rho G^2\Delta}/{\epsilon^4})\) lower bound for deterministic
first-order methods when the initial gap is small, even when the
oracle returns the entire subdifferential set. The key construction is a
constant width diagonal zero-chain obtained by coupling tilted max-affine
blocks. A natural direction is to extend the result to randomized algorithms,
possibly through random rotations, concentration arguments, or Yao's minimax
principle~\citep{WoodworthSrebro2017,DiakonikolasGuzman2020,
KornowskiShamir2022}. The main challenge is to make the full set zero-chain
robust to small projections onto unrevealed directions.
\section*{AI Disclosure}
The hard instance construction was developed through iterative collaboration with \texttt{GPT‑5.6 Sol}.
The authors directed the development, independently verified the resulting arguments, and assume responsibility for all content. An accompanying Lean formalization, developed with Codex and available at \url{https://github.com/SiyuPan04/Weakly-Convex-Lower-Bound-Lean}, provides a formal verification of the deterministic first-order oracle lower bound established in this paper.
\section*{Acknowledgements}
Jiajin Li was supported by a Natural Sciences and Engineering Research Council of
Canada Discovery Grant RGPIN-2025-05817. Jiajin Li thanks Lai Tian for helpful discussions.
\begingroup
\raggedright
\bibliographystyle{abbrvnat}
\bibliography{ref}
\endgroup

\begin{appendix}
 \section{Appendix}
\begin{lemma}
\label{lem:isometric-identities}
Let \(\rho>0\), let \(f:\R^d\to\R\) be \(\rho\)-weakly convex, and let
\(\bU\in\operatorname O(d',d)\). Then, for every \(\x\in\R^{d'}\),
the following identities hold:
\begin{enumerate}[label=(\roman*)]
    \item $\partial f_{\bU}(\x)
    =
    \bU\partial f(\bU^\top\x).$
    \item $\left\|\nabla(f_{\bU})_{1/(2\rho)}(\x)\right\|
    =
    \left\|\nabla f_{1/(2\rho)}(\bU^\top\x)\right\|.$
\end{enumerate}
\end{lemma}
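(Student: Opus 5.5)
For (i), the plan is to reduce to the convex case through the quadratic shift that already appears in the proof of Lemma~\ref{lem:isometric-invariance}. Set $h:=f+\frac{\rho}{2}\|\cdot\|^2$, which is convex on $\R^d$. Using $\bU^\top\bU=\bI_d$, write
\[
f_{\bU}(\x)+\frac{\rho}{2}\|\x\|^2=h(\bU^\top\x)+\frac{\rho}{2}\|(\bI_{d'}-\bU\bU^\top)\x\|^2 .
\]
The first term on the right is a finite convex function composed with a linear map. The convex chain rule applies with no qualification condition because $h$ is finite everywhere, and it gives $\partial(h\circ\bU^\top)(\x)=\bU\,\partial h(\bU^\top\x)$. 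The second term is smooth, with gradient $\rho(\bI_{d'}-\bU\bU^\top)\x$. Both summands are convex, and one is differentiable, so the sum rule yields
\[
\partial\Bigl(f_{\bU}+\tfrac{\rho}{2}\|\cdot\|^2\Bigr)(\x)=\bU\,\partial h(\bU^\top\x)+\rho(\bI_{d'}-\bU\bU^\top)\x .
\]
For weakly convex functions the subdifferential of $f+\frac\rho2\|\cdot\|^2$ equals $\partial f+\rho\,\mathrm{id}$; this is the regularity noted after the oracle definition. Substituting $\partial h(\bU^\top\x)=\partial f(\bU^\top\x)+\rho\bU^\top\x$ on the right and subtracting $\rho\x$ on the left, the terms $\rho\bU\bU^\top\x$ cancel, and we obtain $\partial f_{\bU}(\x)=\bU\partial f(\bU^\top\x)$.

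For (ii), the plan is to compute the proximal point explicitly. Decompose $\z\in\R^{d'}$ as $\z=\bU\y+\bm v$ with $\y\in\R^d$ and $\bm v\perp\operatorname{range}\bU$, and similarly $\x=\bU\bU^\top\x+\x_\perp$. The proximal objective then separates:
\[
f(\y)+\rho\|\bU^\top\x-\y\|^2+\rho\|\x_\perp-\bm v\|^2 .
\]
Its unique minimizer is $\bm v=\x_\perp$ together with $\y=\prox_{f/(2\rho)}(\bU^\top\x)$; the latter is unique because $f+\rho\|\cdot\|^2$ is strongly convex. Hence $\prox_{f_{\bU}/(2\rho)}(\x)=\bU\prox_{f/(2\rho)}(\bU^\top\x)+\x_\perp$.

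Using $\nabla f_{1/(2\rho)}=2\rho(\mathrm{id}-\prox)$, valid by \citet[Lemma 2.2]{DavisDrusvyatskiy2019}, we get
\[
\nabla (f_{\bU})_{1/(2\rho)}(\x)=2\rho\,\bU\bigl(\bU^\top\x-\prox_{f/(2\rho)}(\bU^\top\x)\bigr)=\bU\,\nabla f_{1/(2\rho)}(\bU^\top\x).
\]
Because $\bU$ has orthonormal columns, it is an isometry, so the norms agree.

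The main obstacle is the subdifferential chain rule in (i), since $\bU^\top$ is not invertible when $d'>d$. The convexification step is what handles this: it is done so that only the convex chain rule for a finite function composed with a linear map is needed, and that rule is unconditional. Everything else is routine bookkeeping.
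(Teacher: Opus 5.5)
Your proof is correct, and part (ii) is essentially the paper's argument: the same separation of the proximal objective along $\operatorname{range}\bU$ and its orthogonal complement, the same closed form $\prox_{f_{\bU}/(2\rho)}(\x)=\bU\prox_{f/(2\rho)}(\bU^\top\x)+(\bI_{d'}-\bU\bU^\top)\x$, and the same appeal to Lemma 2.2 of Davis--Drusvyatskiy.

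Part (i) takes a genuinely different route. The paper cites the exact chain rule of Rockafellar--Wets (Exercise 10.7) for composition with a map whose derivative has full rank, which applies because $\bU^\top$ is surjective. That result holds for any proper lower semicontinuous $f$, so it needs no weak convexity, but it is a black-box result from nonconvex variational analysis. You instead shift by $\tfrac{\rho}{2}\|\cdot\|^2$ and use only convex calculus: the unconditional chain rule for a finite convex function composed with a linear map, and the sum rule with a smooth convex term. You combine these with the rule $\partial(g+\tfrac{\rho}{2}\|\cdot\|^2)=\partial g+\rho\,\mathrm{id}$, applied to both $f$ and $f_{\bU}$. This is more self-contained and uses the weak-convexity hypothesis the lemma carries anyway.

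One minor point of framing. Non-invertibility of $\bU^\top$ is not really an obstruction, since surjectivity is exactly the hypothesis under which the general chain rule holds with equality. Your convexification avoids needing that result rather than overcoming a real difficulty. Also, the shift rule you use is the sum rule with a $C^1$ term, not the regularity remark itself.
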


\begin{proof}[Proof of Lemma~\ref{lem:isometric-identities}]
For part~(i), since \(\bU^\top\) has full row rank, the exact
subdifferential chain rule
\cite[Exercise~10.7]{rockafellar2009variational} gives
$\partial f_{\bU}(\x)
=
\bU\partial f(\bU^\top\x).
$ 

For part~(ii), for every \(\z\in\R^{d'}\), the orthogonal decomposition
\[
\z-\x
=
\bU\bU^\top(\z-\x)
+
(\bI_{d'}-\bU\bU^\top)(\z-\x)
\]
gives
\[
\begin{aligned}
f_{\bU}(\z)+\rho\|\z-\x\|^2
&=
f(\bU^\top\z)
+\rho\|\bU^\top\z-\bU^\top\x\|^2 +
\rho\left\|
(\bI_{d'}-\bU\bU^\top)(\z-\x)
\right\|^2.
\end{aligned}
\]
The two orthogonal components can be minimized independently. Hence
\begin{equation}
\label{eq:prox}
    \prox_{\frac{1}{2\rho} f_{\bU}}(\x)
    =
    \bU\prox_{\frac{1}{2\rho} f}(\bU^\top\x)
    +
    (\bI_{d'}-\bU\bU^\top)\x.
\end{equation}
Consequently, we obtain 
\[
\begin{aligned}
\nabla(f_{\bU})_{1/(2\rho)}(\x)
&=
2\rho
\left(
\x-\prox_{\frac{1}{2\rho}f_{\bU}}(\x)
\right) \\
&=
2\rho\left[
\x
-\bU\prox_{\frac{1}{2\rho}f}(\bU^\top\x)
-(\bI_{d'}-\bU\bU^\top)\x
\right] \\
&=
2\rho\bU
\left(
\bU^\top\x
-\prox_{\frac{1}{2\rho}f}(\bU^\top\x)
\right) \\
&=
\bU\nabla f_{1/(2\rho)}(\bU^\top\x),
\end{aligned}
\]
where the first and last equalities follow from \citet[Lemma 2.2]{DavisDrusvyatskiy2019}, and the second one follows from \eqref{eq:prox}. 
Finally, the norm identity follows from
\(\bU^\top\bU=\bI_d\). We finish the proof.
\end{proof}
\end{appendix}
\end{document}